\long\def\symbolfootnote[#1]#2{\begingroup%
\def\thefootnote{\fnsymbol{footnote}}\footnote[#1]{#2}\endgroup}
\newtheorem{theorem}{Theorem}[section]
\newtheorem{lemma}[theorem]{Lemma}
\newtheorem{lem}[theorem]{Lemma}
\newtheorem{proposition}[theorem]{Proposition}
\newtheorem{constr}[theorem]{Construction}
\newtheorem{cor}[theorem]{Corollary}
\newtheorem{step}{Step}
\theoremstyle{definition}
\newtheorem{rem}[theorem]{Remark}
\newtheorem{defin}[theorem]{Definition}
\newcommand{\R}{\mathbf{R}}
\newcommand{\Z}{\mathbf{Z}}
\begin{document}

\title{Graph manifolds with boundary are virtually special}
\author[P.~Przytycki]{Piotr Przytycki$^\dag$}
\address{Inst. of Math., Polish Academy of Sciences\\
 \'Sniadeckich 8, 00-956 Warsaw, Poland}
 %{Institute of Mathematics, Polish Academy of Sciences,\\  \'Sniadeckich 8, 00-956 Warsaw, Poland}
\email{pprzytyc@mimuw.edu.pl}
\thanks{$\dag$ Partially supported by MNiSW grant N201 012 32/0718 and the Foundation for Polish Science.}
\author[D.~T.~Wise]{Daniel T. Wise$^\ddag$}
           \address{Math. \& Stats.\\
                    McGill University \\
                    Montreal, Quebec, Canada H3A 2K6 }
           \email{wise@math.mcgill.ca}
\thanks{$\ddag$ Supported by NSERC}

%\subjclass[2000]{20F67} \keywords{CAT(0) cube complexes, Graph Manifolds} \date{\today}

\maketitle

\begin{abstract}
\noindent
Let $M$ be a graph manifold. We prove that fundamental groups of embedded incompressible surfaces in $M$ are separable in $\pi_1M$, and that the double cosets for crossing surfaces are also separable. We deduce that if there is a ``sufficient'' collection of surfaces in $M$, then $\pi_1M$ is virtually the fundamental group of a special nonpositively curved cube complex.
%That is a complex that admits a local isometry into the Salvetti complex of a right-angled Artin group.
We provide a sufficient collection for graph manifolds with boundary thus proving that their fundamental groups are virtually special, and hence linear.
\end{abstract}

\section{Introduction}

A \emph{graph manifold} is an oriented compact connected $3$--manifold that is irreducible and has only Seifert-fibered pieces in its JSJ decomposition. Hempel proved that the fundamental groups of all Haken $3$--manifolds, in particular all graph manifolds, are residually finite \cite[Thm~1.1]{He}. Throughout the article we assume that a graph manifold is not a single Seifert-fibered space and not a Sol manifold. For background on graph manifolds we refer to the survey article by Buyalo and Svetlov \cite{BS}.

We are interested in separability properties of surfaces properly embedded in graph manifolds. A subgroup $F$ of a group $G$ is \emph{separable} if for each $g\in G- F$, there is a finite index subgroup $H$ of $G$ with $g\notin HF$. Let $S$ be an oriented incompressible surface embedded in a graph manifold $M$. Then $\pi_1S$ embeds in $\pi_1M$ by the loop theorem.

\begin{theorem}
\label{thm:separability}
Let $M$ be a graph manifold (with or without boundary). Let $S$ be an oriented incompressible surface embedded in $M$. Then $\pi_1S$ is separable in $\pi_1M$.
\end{theorem}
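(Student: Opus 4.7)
The plan is to use Scott's covering-space reformulation of separability: $\pi_1 S$ is separable in $\pi_1 M$ if and only if every compact subset $K$ of the cover $M_S\to M$ corresponding to $\pi_1 S$ embeds into some finite intermediate cover $\tilde M\to M$. Producing such an $\tilde M$ for each $K$ becomes the goal; the tool for producing it is the JSJ decomposition of $M$.

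First I would isotope $S$ into standard position with respect to the JSJ tori, so that every component of $S\cap M_v$ in a Seifert-fibered piece $M_v$ is either horizontal (transverse to the fibers) or vertical (a union of fibers). This realizes $S$ itself as a graph of surfaces dual to the JSJ graph of $M$, and pulls back to a graph-of-spaces decomposition of $M_S$ whose vertex spaces are covers $(M_v)_{S_v}$ of the pieces $M_v$ corresponding to the surface subgroups $\pi_1 S_v$.

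Next I would work piece by piece. For each vertex piece, Scott's separability theorem for surface subgroups of Seifert-fibered $3$-manifold groups lets me find a finite cover $\tilde M_v\to M_v$ into which the compact part of $K$ lying in $(M_v)_{S_v}$ embeds; similarly each edge torus $T$ can be replaced by a finite cover into which the compact part of $K$ in $T_S$ embeds. The core obstacle is coherence: the restriction of $\tilde M_v\to M_v$ to a boundary torus $T\subset\partial M_v$ must match the corresponding restriction from the neighbouring piece $\tilde M_w\to M_w$, since otherwise there is no way to reglue them to a cover of $M$.

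To overcome this, I would pass to further finite covers so that on every JSJ torus $T$ the induced cover is a sufficiently divisible characteristic cover, of the form $T\to T$ obtained by quotienting $\pi_1 T=\Z^2$ by $p\Z^2$ for large $p$. Since $\pi_1 T$ is a separable subgroup of each adjacent vertex group $\pi_1 M_v$ (both are $3$-manifold groups of Seifert type, where $\Z^2$-subgroup separability is classical), such a cover of $T$ extends to a finite cover of each adjacent $\tilde M_v$; by enlarging $p$ so that all constraints from the finitely many JSJ tori are simultaneously satisfied, the pieces can be glued along matching boundary covers to build a finite cover $\tilde M\to M$ containing $K$ as claimed. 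The main delicate point is arranging the two types of intersections—horizontal pieces of $S$ contribute covers in the horizontal direction of $T$, vertical pieces contribute covers in the fiber direction—so that on each torus the required characteristic cover simultaneously dominates both the $S_v$-covers and the $S_w$-covers from the two sides, which is where the separability of surface subgroups within each Seifert piece is used most essentially.
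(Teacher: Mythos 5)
Your overall strategy---Scott's compact-core criterion, LERF of the Seifert pieces, then gluing along characteristic covers of the JSJ tori---is a genuinely different route from the paper, which never invokes Scott's criterion: it fixes $g\notin\pi_1S$, traces a path $\widetilde\gamma$ in $\widetilde M$, and builds covers adapted to that single path. But as written your argument has two genuine gaps. The first is a cross-block collision problem. A compact set $K\subset M_S$ meets finitely many blocks of $M_S$, typically several lying over the same block $M_v$ of $M$ (and most of them disjoint from $S$, being copies of universal covers of blocks). Embedding $K\cap(\text{block})$ into a finite cover of $M_v$ block by block and then gluing one cover of each $M_v$ in the pattern of the JSJ graph of $M$ forces distinct blocks of $M_S$ to be identified in $\tilde M$, and nothing in your argument prevents two points of $K$ lying in different blocks of $M_S$ from colliding there. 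Concretely, for separability you must keep the endpoint of a lift of a path representing $g$ off the image of $S$; if that path exits the blocks crossed by $S$, your cover can fold its terminal block back onto a block containing $S$. The paper's fix is global: it arranges (using the $S$--injective covers of Construction~\ref{constr:separating}) that a JSJ torus $T'$ separating the end of the path from $S'$ is disjoint from $S'$, and then passes to the double cover dual to $[T']\in H^1(M',\Z_2)$, so that the tail of the path lies entirely in empty blocks. You need some such device, or the semicover-completion machinery of Lemma~\ref{lem:completions}, to make the blocks of $M_S$ meeting $K$ land in distinct blocks of $\tilde M$.

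Second, the boundary-control step is not justified. Separability of $\pi_1T$ in $\pi_1M_v$ produces, for each element outside $\pi_1T$, some finite cover excluding it; it does not produce a further finite cover of a given $\tilde M_v$ restricting to a prescribed characteristic cover $p\Z\times p\Z$ on every elevation of every boundary torus. Achieving that requires controlling the fiber-direction degrees (via cyclic covers dual to horizontal surfaces) and the base-direction degrees on all boundary components simultaneously (via the omnipotence statement, Lemma~\ref{lem:omnipotent}); this is precisely the content of Construction~\ref{constr:separating} and of Step~2 in the proof of Theorem~\ref{thm:double_separability}, and it is the technical heart of the matter rather than a routine consequence of separability of $\pi_1 T$. (You also omit the degenerate cases where $S$ is a vertical torus or an annulus, which the paper handles separately via Hempel's theorem and characteristic covers, but those are easy.)
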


More generally, consider subgroups $F_1,F_2\subset G$. The double coset $F_1F_2$ is \emph{separable} if for each $g\in G- F_1F_2$ there is a finite index subgroup $H$ of $G$ with $g\notin HF_1F_2$.

We identify the group of covering transformations of the universal cover $\widetilde{M}$ of $M$ with $\pi_1M$.

\begin{theorem}
\label{thm:double_separability}
Let $M$ be a graph manifold. Let $S,P\subset M$ be oriented incompressible surfaces whose intersections with each block are horizontal or vertical (see Section~\ref{sec:not}). Let $\widetilde{S},\widetilde{P}\subset \widetilde{M}$ be intersecting components of the preimages of $S,P$.
Then $\mathrm{Stab}(\widetilde{S}) \mathrm{Stab}(\widetilde{P})$ is separable in $\pi_1M$.
\end{theorem}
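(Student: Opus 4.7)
The plan is to reformulate the double coset condition geometrically: $g \in \mathrm{Stab}(\widetilde{S})\mathrm{Stab}(\widetilde{P})$ if and only if $g\widetilde{P}$ lies in the $\mathrm{Stab}(\widetilde{S})$-orbit of $\widetilde{P}$ among the preimages of $P$ in $\widetilde{M}$. Indeed, $g = sp$ with $s \in \mathrm{Stab}(\widetilde{S})$, $p \in \mathrm{Stab}(\widetilde{P})$ forces $g\widetilde{P} = s\widetilde{P}$, and conversely $g\widetilde{P} = s\widetilde{P}$ gives $s^{-1}g \in \mathrm{Stab}(\widetilde{P})$. So for $g \notin \mathrm{Stab}(\widetilde{S})\mathrm{Stab}(\widetilde{P})$, I must find a finite index $H \leq \pi_1 M$ so that in the cover $\widetilde{M}/H$ the image of $g\widetilde{P}$ is not in the orbit of the image of $\widetilde{P}$ under the image of $\mathrm{Stab}(\widetilde{S})$.

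I would begin by invoking Theorem~\ref{thm:separability} together with Hempel's residual finiteness to pass to a finite regular cover $\widehat{M}\to M$ in which $S$ and $P$ lift to embedded surfaces $\widehat{S},\widehat{P}$ with $\widetilde{S},\widetilde{P}$ among their preimages. Then $\widehat{S}\cap\widehat{P}$ is a compact $1$-manifold, so there are only finitely many $\pi_1\widehat{S}$-orbits of preimages of $\widehat{P}$ meeting $\widetilde{S}$, and hence only finitely many $(\mathrm{Stab}(\widetilde{S})\cap\pi_1\widehat{M})$-orbits. Next, I would use the JSJ Bass--Serre tree $T$ and the horizontal/vertical hypothesis to organize $\widetilde{S}$, $\widetilde{P}$, and $g\widetilde{P}$ as trees of planes in blocks. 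If $g\widetilde{P}\neq s\widetilde{P}$ for all $s\in\mathrm{Stab}(\widetilde{S})$, then this failure is witnessed at some vertex of $T$ where the plane types or the gluing data across a JSJ torus disagree, providing a finite combinatorial invariant whose realization in a further finite cover should yield the desired separation, via block-wise separability of horizontal and vertical surface subgroups in Seifert fibered pieces (Scott--Niblo).

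The main obstacle is promoting this local invariant to a genuine finite-index separation in $\pi_1 M$: while each Seifert piece admits separability of horizontal and vertical surfaces and their double cosets, the surfaces $\widetilde{S}$ and $\widetilde{P}$ traverse infinitely many blocks, so the per-block separations must be coordinated compatibly across the infinitely many JSJ tori crossed by $\widetilde{S}\cap\widetilde{P}$. I would expect to handle this by combining the block-level argument with residual finiteness applied to the specific element $s^{-1}g$ that would witness $g\widetilde{P} = s\widetilde{P}$: since no such $s$ exists, one can locate the obstruction in a bounded neighborhood of a chosen basepoint of $\widetilde{S}\cap\widetilde{P}$ (using cocompactness of $\mathrm{Stab}(\widetilde{S})\cap\mathrm{Stab}(\widetilde{P})$ on the intersection), and the compatibility across the tree of blocks then follows from the tree structure itself, reducing to finitely many applications of Theorem~\ref{thm:separability} together with residual finiteness.
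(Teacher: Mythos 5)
Your opening reformulation is the same as the paper's: $g\in\mathrm{Stab}(\widetilde S)\mathrm{Stab}(\widetilde P^0)$ iff the elevation of $P$ through $g\tilde m$ meets $\widetilde S$ at a translate of the basepoint $\tilde m\in\widetilde S\cap\widetilde P^0$, and the goal is a finite cover in which this non-incidence persists. The observation that after a finite cover there are only finitely many $\pi_1S$--orbits of elevations of $P$ meeting $\widetilde S$ is also correct. But the two sentences you lean on to finish --- that the failure of $g\widetilde P=s\widetilde P$ ``is witnessed at some vertex of $T$'' and can be ``located in a bounded neighborhood of a chosen basepoint using cocompactness of $\mathrm{Stab}(\widetilde S)\cap\mathrm{Stab}(\widetilde P)$'' --- are where the proof actually lives, and as stated they fail. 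The set of blocks met by both $\widetilde S$ and $\widetilde P$ (the paper's \emph{core}) is in general infinite and is not contained in any bounded neighborhood of $\tilde m$; the intersection $\widetilde S\cap\widetilde P$ can have infinitely many $\bigl(\mathrm{Stab}(\widetilde S)\cap\mathrm{Stab}(\widetilde P)\bigr)$--orbits of components, so there is no cocompactness to invoke. One must control \emph{every} core block. The paper does this by quotienting the core by the partial $\pi_1S$--action and proving, per orbit of core blocks, that one can pass to a finite cover of the block in which the images of the two surfaces still miss each other at basepoint-translates (Step~1); even this local statement requires a genuine argument distinguishing whether the elevations of $P\cap B$ are compact or spiral, via an auxiliary cyclic action.

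The second, and larger, gap is the assertion that ``the compatibility across the tree of blocks then follows from the tree structure itself.'' Assembling per-block finite covers into a finite cover of $M$ is the hardest part of the proof and does not follow from the tree structure: one needs each block cover to be $n!$--characteristic on its boundary tori so that the covers can be matched along JSJ tori; one needs the formalism of semicovers with halt tori and a completion lemma to turn the resulting partial object into an honest finite cover; and in blocks where both $S$ and $P$ are vertical one needs the ``untwining arcs'' Proposition~\ref{prop:Hempel_simult} to guarantee that no unwanted elevation of a base arc of $P$ reconnects the two boundary circles met by $S$ --- this is exactly what controls where the extension of the quotient of $\overline P$ can leave the core and ensures that no halt torus is crossed by both surfaces, which in turn is what allows the final $\mathbf{Z}_2$--cover argument confining any new intersections to the already-controlled region. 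Your proposal contains no substitute for any of this machinery, so while the strategy points in the right direction, the argument as written does not close.
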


We apply Theorems~\ref{thm:separability}~and~\ref{thm:double_separability} to obtain:
\begin{cor}
\label{cor:virtually special}
Let $M$ be a graph manifold with non-empty boundary. Then $\pi_1M$ is virtually the fundamental group of a special cube complex.
\end{cor}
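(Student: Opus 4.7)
The plan is to apply Sageev's cubulation to a well-chosen collection of incompressible surfaces in $M$ and then invoke the Haglund--Wise criteria for specialness, using Theorems~\ref{thm:separability} and~\ref{thm:double_separability}.

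The first step is to produce a \emph{sufficient collection} $\mathcal{S}$ of properly embedded incompressible surfaces in $M$ whose intersections with each Seifert piece are horizontal or vertical, and such that the elevations of surfaces in $\mathcal{S}$ to $\widetilde{M}$ separate every pair of distinct points of $\widetilde{M}$ and fall into finitely many $\pi_1M$-orbits. For graph manifolds with non-empty boundary each Seifert piece has non-empty boundary and therefore admits horizontal surfaces that, possibly after passing to finite covers, can be matched consistently across JSJ tori; meanwhile essential annuli and vertical tori based on arcs and loops in the base orbifolds supply the vertical surfaces needed to separate points within each piece.

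Given such an $\mathcal{S}$, Sageev's dual cube complex construction converts the resulting $\pi_1M$-wall structure on $\widetilde{M}$ into a CAT(0) cube complex $\widetilde{C}$ carrying a $\pi_1M$-action. Finiteness of the number of orbits of walls gives cocompactness of the quotient $C=\widetilde{C}/\pi_1M$, while point separation gives properness, so $C$ is a compact non-positively curved cube complex with $\pi_1C\cong \pi_1M$.

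It then remains to arrange virtual specialness of $C$. Hyperplane stabilizers in $\widetilde{C}$ are conjugate to the surface subgroups $\mathrm{Stab}(\widetilde S)\cong \pi_1S$ appearing in Theorem~\ref{thm:separability}, so separability of these subgroups, together with residual finiteness of $\pi_1M$, allows one to pass to a finite cover of $C$ in which hyperplanes are two-sided, embedded, and non-self-osculating. Inter-osculation of pairs of crossing hyperplanes is controlled by the double cosets $\mathrm{Stab}(\widetilde S)\mathrm{Stab}(\widetilde P)$, so Theorem~\ref{thm:double_separability} rules it out in a further finite cover. Intersecting finitely many such finite-index subgroups yields a special cube complex covering $C$. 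The principal obstacle is the construction of the sufficient collection $\mathcal{S}$: one must arrange horizontal surfaces in adjacent Seifert pieces so that they glue along JSJ tori, and this is precisely the point at which the boundary hypothesis on $M$ is essential.
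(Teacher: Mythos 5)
Your overall architecture (cubulate $\widetilde{M}$ dually to a finite collection of surfaces, then use Theorems~\ref{thm:separability} and~\ref{thm:double_separability} to verify a Haglund--Wise specialness criterion) is the same as the paper's, but two steps contain genuine gaps. The first is your claim that finiteness of the number of orbits of walls gives cocompactness of $C=\widetilde{C}/\pi_1M$, and that point separation gives properness. This is false: finitely many orbits of walls bounds the orbits of hyperplanes, not of cubes, and the paper explicitly notes that in most cases the action of $\pi_1M$ on the dual cube complex is \emph{not} cocompact. Even freeness of the action requires an argument (central elements of block groups are killed by horizontal pieces; non-central ones need the vertical base curves together with the annulus arcs to be \emph{strongly filling}, a condition you do not impose). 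Because the action is free but not cocompact, the applicable criterion is \cite[Thm 4.1]{HW2} for non-cocompact actions, which, besides separability of the $\pi_1S$ and of the double cosets, requires a further finiteness condition: for each $S$, in the cover $M^S$ only finitely many elevations of surfaces of the collection are disjoint from $S$ yet not separated from it by another elevation. Your proposal omits this condition entirely; checking it (via nesting of horizontal elevations and bounded separation of strongly filling curves) is a nontrivial part of the paper's proof of Theorem~\ref{thm:suff implie special}.

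The second gap is the construction of the sufficient collection itself, which you correctly identify as the principal obstacle but do not overcome. The essential requirement is that every JSJ or boundary torus $T$ of a block $B$ be crossed by a surface of the collection that is vertical with respect to $B$. Such a surface starts as a vertical annulus in $B$, but its boundary circles are horizontal in the adjacent block and must be capped off there by a horizontal surface, whose remaining boundary circles must in turn be continued into further blocks, and so on; the process can only terminate at $\partial M$. Making this induction work requires passing to a finite cover whose underlying graph satisfies a combinatorial condition (the paper's (antennas) property, achieved via a degree $2^k$ cover dual to non-separating vertical tori) together with a realization lemma for boundary slopes of horizontal surfaces in a single block (the paper's Lemma~\ref{lem: finding horizontal}). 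Saying that horizontal surfaces "can be matched consistently across JSJ tori, possibly after passing to finite covers" is precisely the assertion that needs proof.
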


A \emph{special} cube complex is a nonpositively curved cube complex that admits a local isometry into the Salvetti complex of a right-angled Artin group (see \cite{HW} and \cite{HW2}). As a consequence, the fundamental groups of special cube complexes (which are also called \emph{special}) are subgroups of right-angled Artin groups. The latter have various outstanding properties. To mention just a few, they are linear \cite{Hump} and residually torsion-free nilpotent \cite{DK}. Moreover, they virtually satisfy Agol's RFRS condition \cite{A}.

It was proven that fundamental groups of closed hyperbolic $3$--manifolds are virtually special \cite{Hier,Ahak}. A relative version of this theorem says that the fundamental groups of hyperbolic $3$--manifolds with boundary are virtually special as well \cite{Hier}. Our theorem treats the complementary case, with an eye towards eventually analyzing the case of manifolds with both hyperbolic and Seifert-fibered pieces \cite{PWmixed}.

The class of graph manifolds with boundary has been studied by Wang and Yu who prove \cite[Theorem 0.1]{WY} that they all virtually fiber over the circle. (Note that we do not exploit that result in our article.) A closed graph manifold might not virtually fiber \cite{LW}. Hence, by Agol's virtual fibering criterion \cite{A} such a manifold cannot have a virtually special fundamental group. Thus some restriction is needed in Corollary~\ref{cor:virtually special}.

In fact, we have recently learned that independently Yi Liu has proved \cite[Thm~1.1]{Liu} that the graph manifolds with virtually special fundamental groups are exactly the ones that admit a nonpositively curved Riemannian metric. It was proved by Leeb \cite[Thm~ 3.2]{Leeb} that graph manifolds with boundary admit a nonpositively curved Riemannian metric (with geodesic boundary). Hence our Corollary~\ref{cor:virtually special} is a special case of the theorem of Liu.
\smallskip

In order to obtain Corollary~\ref{cor:virtually special} we prove, using Theorems~\ref{thm:separability} and~\ref{thm:double_separability}, the following criterion involving ``sufficient'' collections of surfaces. (For definitions see Section~\ref{sec:not}.)

\begin{defin}Let $\mathcal{S}$ be a collection of incompressible oriented surfaces embedded in a graph manifold $M$ satisfying the property that the intersection of each surface from $\mathcal{S}$ with each block of $M$ is vertical or horizontal. We say that $\mathcal{S}$ is \emph{sufficient} if:
\begin{enumerate}[(1)]
\item
for each block $B\subset M$ and each torus $T\subset \partial B$, there is a surface $S\in \mathcal{S}$ such that $S\cap T$ is non-empty and vertical w.r.t.~$B$,
\item
for each block $B\subset M$ there is a surface $S\in \mathcal{S}$  such that $S\cap B$ is horizontal.
\end{enumerate}
\end{defin}

Note that property (1) automatically implies property (2). Indeed, let $B_0$ be a block and let $B_1$ be any adjacent block. Let $T=B_0\cap B_1$. By (1) there is a surface $S$ such that $S\cap T$ is vertical in $B_1$. Then $S\cap B_0$ is horizontal.

Our criterion is the following:

\begin{theorem}
\label{thm:suff implie special}
Assume that a graph manifold $M$ admits a sufficient collection $\mathcal{S}$. Then $\pi_1M$ is virtually special.
\end{theorem}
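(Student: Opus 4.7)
The plan is to cubulate $\pi_1 M$ by applying Sageev's construction to a wall space built from the preimages in $\widetilde M$ of the surfaces in $\mathcal S$, and then to invoke a Haglund--Wise style criterion which reduces virtual specialness to the separability statements already established in Theorems~\ref{thm:separability} and~\ref{thm:double_separability}.

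Concretely, I would let $\mathcal W$ consist of all components of $p^{-1}(S)$ for $S\in\mathcal S$, where $p\colon\widetilde M\to M$ is the universal cover. Since each $S\in\mathcal S$ is two-sided and incompressible, every element of $\mathcal W$ is a properly embedded incompressible surface in $\widetilde M$ that separates it into two halfspaces; this yields a $\pi_1 M$-invariant wall space. Sageev's construction then produces a CAT(0) cube complex $X$ with a $\pi_1 M$-action whose hyperplane stabilizers are precisely the surface subgroups $\mathrm{Stab}(\widetilde S)$ for $\widetilde S\in\mathcal W$, and whose stabilizers of pairs of transversely intersecting hyperplanes are the double cosets $\mathrm{Stab}(\widetilde S)\mathrm{Stab}(\widetilde P)$.

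Next I would verify that the $\pi_1 M$-action on $X$ is proper and cocompact. Both properties are controlled by the horizontal/vertical structure of $\mathcal S$ inside each block. Cocompactness reduces to showing: (i) local finiteness of walls, giving finitely many wall configurations on any compact region of $\widetilde M$; and (ii) a uniform bound on pairwise transverse walls, coming from the fact that pairwise crossings occur within a single block where horizontal and vertical surfaces meet in controlled bounded patterns. Properness reduces, for every nontrivial $g\in\pi_1 M$, to producing a wall separating a basepoint from its $g$-translate: if $g$ acts hyperbolically on the Bass--Serre tree of the JSJ graph of groups, this is furnished by a vertical wall crossing an edge torus on the axis, which exists by condition~(1); if $g$ stabilizes a block, a combination of horizontal walls (from condition~(2)) and vertical walls does the job.

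Having cubulated the group, I finish by invoking the Haglund--Wise criterion for virtual specialness (\cite{HW},~\cite{HW2}): a group $G$ acting properly and cocompactly on a CAT(0) cube complex is virtually special provided $G$ is residually finite, each hyperplane stabilizer is separable in $G$, and each pair of crossing hyperplanes has separable double coset of stabilizers. Residual finiteness of $\pi_1 M$ is Hempel's theorem \cite{He}; hyperplane stabilizer separability is Theorem~\ref{thm:separability}; double coset separability for crossing hyperplanes is Theorem~\ref{thm:double_separability}. I expect the main obstacle to be the verification of cocompactness of the Sageev action, since it requires careful bookkeeping of wall intersection patterns in each block and must rule out infinite-dimensional cubes. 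The verification of properness and the final application of the Haglund--Wise criterion are then essentially formal, given the separability results the paper has already proved.
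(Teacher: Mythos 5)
Your overall strategy---Sageev's dual cube complex built from the preimages of the surfaces, followed by a Haglund--Wise separability criterion fed by Theorems~\ref{thm:separability} and~\ref{thm:double_separability}---is the paper's strategy, but there are two genuine gaps. First, the walls coming from $\mathcal{S}$ alone do not suffice even for a free action. A sufficient collection only guarantees one vertical annulus per JSJ/boundary torus of each block and one horizontal piece per block. An element acting hyperbolically on the Bass--Serre tree is not separated from the basepoint by a surface that merely \emph{crosses} a JSJ torus on its axis (a surface transverse to that torus does not separate its two sides), and a non-central element of a block stabilizer need not act freely on the tree dual to the few vertical pieces that $\mathcal{S}$ provides in that block. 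The paper therefore first enlarges $\mathcal{S}$ to $\mathcal{S}'$ by adding all JSJ tori (handling tree-hyperbolic elements) and, in each block, vertical tori whose base curves fill the base surface, so that together with the base arcs of the annuli from condition~(1) one obtains a \emph{strongly filling} family (handling non-central block elements); central elements are handled by the horizontal piece from condition~(2). Without this augmentation your properness argument does not go through.

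Second, the action on $X$ is in general \emph{not} cocompact---the paper notes this explicitly---so the ``proper cocompact plus separability'' form of the specialness criterion you plan to invoke is not available. The criterion actually used, \cite[Thm 4.1]{HW2}, replaces cocompactness by a finiteness condition: for each $S\in\mathcal{S}'$, in the cover $M^S$ there are only finitely many elevations of surfaces of $\mathcal{S}'$ that are disjoint from $S$ but not separated from $S$ by a third elevation. Verifying this is a substantive step (parallel horizontal elevations meeting a given block are nested, giving at most two such elevations per piece; for vertical pieces one uses that two elevations of a strongly filling family of curves not separated by a third lie at uniformly bounded distance in the universal cover of the base surface), and it is precisely where the strongly filling augmentation is used a second time. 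So the obstacle you flagged is real, but the fix is not careful bookkeeping to \emph{prove} cocompactness; it is to abandon cocompactness and verify the finiteness hypothesis of the non-cocompact criterion instead.
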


Once we prove Theorem~\ref{thm:suff implie special}, in order to derive Corollary~\ref{cor:virtually special} it remains to construct a sufficient collection for graph manifolds with boundary.

In the course of his proof \cite[Lem~4.7]{Liu} Liu constructs a set of cohomology classes giving rise to a sufficient collection. Hence combining this with Theorem~\ref{thm:suff implie special} one can get an alternate argument for Liu's theorem that all graph manifolds admitting a nonpositively curved Riemannian metric have virtually special fundamental groups. Liu suggests to us that cut-and-paste operations on the surfaces obtained in \cite[\S5.5.3]{BS} also yield a sufficient collection.

\smallskip

The article is organized as follows: In Section~\ref{sec:not} we discuss notation. In Section~\ref{sec:cub} we derive Corollary~\ref{cor:virtually special} from Theorems~\ref{thm:separability} and \ref{thm:double_separability}. More precisely, we first prove Theorem~\ref{thm:suff implie special} and then prove that graph manifolds with boundary virtually have a sufficient collection (Proposition~\ref{prop:suff exists}). In Section~\ref{sec:block_separation} we prepare the background for the proofs of Theorem~\ref{thm:separability} in Section~\ref{sec:separability} and Theorem~\ref{thm:double_separability} in Section~\ref{sec:double}.

\subsection*{Acknowledgements} We thank Yi Liu for providing feedback on our preprint leading to many improvements.

\section{Notation}
\label{sec:not}

A graph manifold will be denoted by $M$. %Let us elaborate on the viewpoint on the fundamental groups we take in this article. By $\pi_1M$ we denote the group of covering transformations of the universal cover $\widetilde{M}$ of $M$. Once we choose a basepoint $\tilde{m}\in \widetilde{M}$ we can assign to a covering transformation $g$ a path $\widetilde{\gamma}$ joining $\tilde{m}$ to $g\tilde{m}$. This path projects to a loop in $M$ based at the projection $m$ of $\tilde{m}$. This allows us to identify the group of covering transformations $\pi_1(M)$ with the fundamental group of $(M,m)$ and justifies the notation $\pi_1(M)$.
The JSJ tori decompose $M$ into pieces called \emph{blocks} (denoted usually by $M_v$ or $B$).
By passing to a finite degree cover \cite[Prop~4.4]{LW} we can assume that all the blocks are
products $M_v=S^1\times F_v$, where $F_v$ is an oriented surface with at least two boundary components
and nonzero genus. Then $M$ is called \emph{simple}. The induced quotient
map $\pi_1M_v\rightarrow \pi_1F_v$ does not depend on the choice of the product structure.

Let $S$ be an oriented incompressible surface embedded in $M$. An \emph{elevation} $S'\rightarrow M'$ of the embedding $S\rightarrow M$ is an embedding of a cover $S'$ of $S$ into a cover $M'$ of $M$ such that the diagram below commutes. (A \emph{lift} is an elevation with $S'=S$.)

\begin{diagram}
S' &\rTo&M'    \\
\dTo         &    &\dTo   \\
S          &\rTo&M
\end{diagram}

The surface $S$ can be homotoped
%\cite[Lemma 3.3]{RW}
so that each component of $S\cap M_v$ (called a \emph{piece}) is either \emph{vertical}
(fibered by the circles of the Seifert fibration and essential) or \emph{horizontal} (transverse to the fibers thus covering
$F_v$). The only exception is when $S$ is a $\partial$--parallel annulus. We discuss this case
separately in Remark~\ref{rem:annulus}.

Since $S$ is embedded, for each block $M_v$ the components of $S\cap M_v$ are either all
horizontal or all vertical, or else $S\cap M_v$ is empty. We accordingly call the block
\emph{$S$--horizontal, $S$--vertical} or \emph{$S$--empty}. When the surface
$S$ in question is understood, we simply call the block \emph{horizontal, vertical} or
$\emph{empty}$.

We shall consider (possibly noncompact) covers $M'\rightarrow M$ of graph manifolds. The connected components in $M'$ of the preimage of blocks of $M$ will also be called \emph{blocks}. When a specified elevation of $S$ crosses a block $M'_v\rightarrow M_v$, then this block will be called \emph{horizontal} or \emph{vertical} if $M_v$ is such. Other blocks of $M'$ will be called \emph{empty}.

\section{Cubulation}
\label{sec:cub}

\begin{proof}[Proof of Theorem~\ref{thm:suff implie special}]
Complete the collection $\mathcal{S}$ to $\mathcal{S}'$ by adding all JSJ tori, and adding a collection of vertical tori in each block $M_v$ whose base curves on the surface $F_v$ \emph{fill} $F_v$. With respect to some hyperbolic metric on $F_v$ this means that the complementary regions of the union of the geodesic representatives of the base curves are discs or annular neighborhoods of the boundary. Note that if we add to that family of curves the base arcs of the annuli guaranteed by property (1) of a sufficient collection, all the complementary regions become discs. Call such a family \emph{strongly filling}.

After a homotopy we can assume that the surfaces in $\mathcal{S}'$ are pairwise transverse. Each elevation of an incompressible surface from $\mathcal{S}'$ to the universal cover $\widetilde{M}$ of $M$ splits $\widetilde{M}$ into two components (up to a set of measure $0$). This gives $\widetilde{M}$ the structure of a ``space with walls'' (see \cite{CN} or \cite{Nica}). We can consider the action of $\pi_1M$ on the associated dual CAT(0) cube complex $X$.

We claim that $\pi_1M$ acts freely on $X$. To justify this, pick $g\in \pi_1M$. If $g$ does not stabilize some block $\widetilde{B}\subset \widetilde{M}$, then it acts freely on the tree that is the underlying graph of the graph manifold structure of $\widetilde{M}$. Hence $g$ also acts freely on $X$, since we have included the JSJ tori in $\mathcal{S}'$. Otherwise, suppose $g$ belongs to the stabilizer of $\widetilde{B}$ identified with $\pi_1B$ for some block $B\subset M$. If $g$ is not central in $\pi_1B$, then by the strong filling property for the vertical pieces within $B$, the element $g$ acts freely on the tree that is dual to the preimage in $\widetilde{B}$ of one of the pieces. Since every elevation of a surface in $\mathcal{S}'$ to $\widetilde{M}$ has connected intersection with $\widetilde{B}$, this implies that
$g$ acts freely on $X$. Otherwise, $g$ is central in $\pi_1B$ and the claim follows from
the existence of a horizontal piece in $B$  among the surfaces in $\mathcal{S}'$ (property (2) of a sufficient collection).
Note that in most cases the action of $\pi_1M$ on $X$ is not cocompact.

We now invoke \cite[Thm 4.1]{HW2}, which is a criterion for $\pi_1M\backslash X$ to be virtually special.
In the case of a cube complex $X$ arising from a collection $\mathcal{S}'$ of compact $\pi_1$--injective surfaces in a $3$--manifold $M$, this criterion is satisfied when:

\begin{enumerate}[(1)]
\item
$\mathcal{S}'$ is finite,
\item
for each surface $S\in \mathcal{S}'$, in the $\pi_1S$ cover $M^S=\pi_1S\backslash \widetilde{M}$ of $M$ there are only finitely many elevations of surfaces in $\mathcal{S}'$ disjoint from $S$, but not separated from $S$ by another elevation of a surface from $\mathcal{S}'$,
\item
for each $S\in\mathcal{S}'$ the subgroup $\pi_1S$ is separable in $\pi_1M$,
\item
for each pair of intersecting elevations $\widetilde{S}, \widetilde{P}\subset \widetilde{M}$ of $S,P\in \mathcal{S}'$, the double coset $\mathrm{Stab}(\widetilde S)\mathrm{Stab}(\widetilde P)$ is separable in $\pi_1M$.
\end{enumerate}

Condition (1) is immediate, conditions (3) and (4) are supplied by Theorems~\ref{thm:separability} and \ref{thm:double_separability}. It remains to discuss condition (2):

Fix $S\in \mathcal{S}'$ and let $P^S$ be an elevation of a surface in $\mathcal{S}'$ to the  $\pi_1S$ cover $M^S$ of $M$. Assume that $P^S$ is disjoint from (the lift of) $S$ but not separated from $S$ by another elevation of a surface from $\mathcal{S}'$. Then $P^S$ must intersect at least one (of the finitely many) blocks $B$ of $M^S$ intersecting $S\subset M^S$ (otherwise an elevation of a JSJ torus separates $S$ and $P^S$). We fix the block $B$. Assume first that $P^S\cap B$ is horizontal and that a component of $P^S\cap B$ projects to a specified piece of the finitely many pieces of $\mathcal{S}'$. Then there can be at most 2 such $P^S$, since they are all nested.

Now assume that $P^S\cap B$ is vertical. Thus $S\cap B$ is also vertical. The entire configuration can then be analyzed using the base curves on the base surface $F$ of $B$. For a strongly filling family of curves, each pair of their elevations to the universal cover of $F$ not separated by a third one has to be at a uniformly bounded distance. Hence there are again only finitely many possible $P^S$. This concludes the argument for condition~(2).

Hence all the conditions of virtual specialness criterion above are satisfied and the cube complex $\pi_1M\backslash X$ is virtually special.
\end{proof}

As an application, we will consider graph manifolds with boundary.

\begin{proposition}
\label{prop:suff exists}
A graph manifold $M$ with non-empty boundary has a finite cover with a sufficient collection.
\end{proposition}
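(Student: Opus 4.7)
The plan is to first reduce to the case that $M$ is simple via \cite[Prop~4.4]{LW}, so each block decomposes as $B = S^1 \times F_v$. I will then build, in a further finite cover, an embedded incompressible oriented surface $S_{B,T}$ for each pair $(B,T)$ consisting of a block and a JSJ torus $T \subset \partial B$, arranged so that $S_{B,T} \cap T$ is non-empty and vertical with respect to $B$. The family $\{S_{B,T}\}$ then satisfies property~(1) of a sufficient collection, which, as remarked after the definition, automatically implies property~(2).

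The core piece of $S_{B,T}$ is a vertical annulus $A = S^1 \times \gamma$, where $\gamma \subset F_v$ is a properly embedded arc from the boundary circle $\partial F_v \cap T$ to some other boundary circle $c$ of $F_v$. The annulus $A$ meets $T$ along a fiber of $B$, which is vertical in $B$, so $A$ already contributes the required vertical intersection. If $c \subset \partial M$, then $A$ itself serves as $S_{B,T}$. Otherwise $c$ lies on a second JSJ torus $T'$ bounding an adjacent block $B'$; on $T'$ the boundary $\partial A \cap T'$ is a fiber circle of $B$, which (because $M$ is not Sol) is transverse to the fibers of $B'$ and hence is horizontal with respect to $B'$. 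I extend $A$ by attaching a horizontal piece $H \subset B'$ with $\partial H \cap T' = \partial A \cap T'$. Such horizontal pieces exist in finite covers of $B'$, since horizontal surfaces in $S^1 \times F_{v'}$ correspond to finite covers of $F_{v'}$, and their boundary slopes on any boundary torus can be prescribed by choosing the cover of $F_{v'}$ appropriately. If $H$ still has free boundary on further JSJ tori, one iterates the same construction (alternating vertical annuli and horizontal pieces) across the dual JSJ graph, until every free boundary lies on $\partial M$.

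Two global issues remain. The first is termination: the iterative extension must stop. Here the hypothesis $\partial M \neq \emptyset$ is crucial. One way to handle this is to pass to a finite cover of $M$ (using residual finiteness of $\pi_1 M$ from \cite[Thm~1.1]{He}) in which every block meets $\partial M$; then every extension step terminates immediately. Alternatively, since the dual JSJ graph is finite, one can induct along a spanning tree rooted at a block meeting $\partial M$. The second issue is coherence of covers: different blocks may demand different finite covers of their base surfaces, and these must assemble into a single finite cover of $M$. This is arranged by taking a sufficiently deep characteristic finite-index normal subgroup of $\pi_1 M$ whose intersection with each block's fundamental group refines all the required local covers.

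The main obstacle I expect to be this last step: matching the slopes on JSJ tori across adjacent blocks within one coherent finite cover of $M$, and ensuring that the resulting $S_{B,T}$ are embedded rather than merely immersed. Embeddedness can be enforced by a final round of covers separating intersecting sheets, where Theorem~\ref{thm:separability} (already proved) supplies the separability needed to realize these covers. Incompressibility is automatic since each piece is vertical or horizontal and essential in its block, and the pieces glue along essential curves on JSJ tori.
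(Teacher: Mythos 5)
Your overall shape (start with a vertical annulus meeting $T$ in a fiber of $B$, then propagate horizontal extensions across the JSJ graph until all free boundary lies on $\partial M$) is the same as the paper's, but three of the steps you flag as "issues to be arranged" are exactly where the real content lies, and two of your proposed fixes fail. First, your primary termination mechanism is impossible: a block of a finite cover meets the boundary of the cover if and only if the block it covers meets $\partial M$, so if $M$ has a boundaryless block, \emph{no} finite cover makes every block meet the boundary; residual finiteness does not help here. Second, you cannot prescribe the boundary slopes of an embedded horizontal surface in $S^1\times F_{v'}$ on \emph{all} boundary tori: the intersection numbers with the fiber are homologically constrained, so only all but one boundary torus can be prescribed and the last is forced (this is precisely Lemma~\ref{lem: finding horizontal}). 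Each horizontal extension therefore leaves one uncontrolled boundary circle, and the whole construction must be organized so that this uncontrolled circle is pushed, step by step, toward a block meeting $\partial M$. That is what the paper's (antennas) property does; it moreover requires the path to be a full subgraph (arranged by a specific $\Z_2^k$ cover dual to vertical tori) so that the auxiliary vertical annuli needed to prescribe the slopes on the side tori live in blocks off the path and do not interfere with the induction. Your "spanning tree rooted at a boundary block" gestures at this but supplies neither the all-but-one bookkeeping nor the fullness condition.

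Third, when $T$ is a JSJ torus your initial annulus $A$ still has a free boundary circle on $T$ itself, so "$A$ itself serves as $S_{B,T}$" is false and you would have to propagate in two directions; the paper sidesteps this by taking the initial annulus from $T$ back to itself (non-separating in $B$), so that both of its boundary circles are horizontal in the single adjacent block and all further extension happens on one side. Finally, your plan to restore embeddedness "by a final round of covers using Theorem~\ref{thm:separability}" is not available: that theorem separates the fundamental group of an already embedded surface and does not convert an immersed surface into an embedded one in a cover. The paper instead keeps everything embedded throughout by taking appropriately many parallel copies of each piece and choosing the auxiliary annuli non-separating and mutually disjoint.
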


Note that combining Proposition~\ref{prop:suff exists} with Theorem~\ref{thm:suff implie special} yields Corollary~\ref{cor:virtually special}.

In the proof of Proposition~\ref{prop:suff exists} we will need the following:

\begin{lemma}[{version of \cite[Lem~1.1]{WY}}]
\label{lem: finding horizontal}
Let $T_1,\ldots, T_n$ be the boundary components of a block $M_v$. Assume we are given families of disjoint identically oriented circles $C_1\subset T_1,\
\ldots,\ C_{n-1}\subset T_{n-1}$ such that the oriented intersection number between $C_i$ and the vertical fiber is non-zero and independent of $i$. Then there is a family of disjoint identically oriented circles $C_n\subset T_n$ such that $\bigcup_{i=1}^n C_i$ is the boundary of an oriented horizontal surface embedded in $M_v$.
\end{lemma}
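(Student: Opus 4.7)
The plan is to work inside the product structure $M_v = S^1 \times F_v$ afforded by simplicity, and to realize the desired horizontal surface as a regular level set of an explicit smooth map $\phi: M_v \to S^1$. On each boundary torus $T_i = S^1 \times \partial_i F_v$, use the basis $\{f_i, b_i\}$ of $H_1(T_i)$ where $f_i$ is the fiber and $b_i = [\partial_i F_v]$, and write the total homology class of the family $C_i$ as $P_i f_i + q b_i$ with $P_i \in \Z$. The hypothesis that the signed intersection of $C_i$ with the fiber is nonzero and independent of $i$ says exactly that the coefficient $q$ is the same nonzero integer for every $i$; the number of parallel curves in the family $C_i$ is then $\gcd(P_i,q)$.

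The key step is to construct a smooth map $g: F_v \to S^1$ with prescribed boundary degrees $\deg(g|_{\partial_i F_v}) = -P_i$ for $i = 1,\dots,n-1$, and $\deg(g|_{\partial_n F_v}) = \sum_{i<n} P_i$. Because $[F_v, S^1] = H^1(F_v;\Z)$ and the image of the restriction $H^1(F_v) \to H^1(\partial F_v) \cong \Z^n$ equals $\{(d_1,\dots,d_n) : \sum d_i = 0\}$ (by the long exact sequence of the pair $(F_v,\partial F_v)$, or equivalently by Stokes applied to the pullback of $d\theta$), such a $g$ exists precisely because the prescribed degrees sum to zero.

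One then defines $\phi(t,x) = t^q g(x)$. Since $q \neq 0$, the derivative in the fiber direction never vanishes, so $\phi$ is a submersion and $H := \phi^{-1}(1)$ is a properly embedded surface transverse to every fiber, hence horizontal. On each torus $T_i$ the restriction $\phi|_{T_i}$ has degrees $q$ and $-P_i$ in the $f_i$ and $b_i$ directions, so $\phi^{-1}(1) \cap T_i$ is, after choosing the orientation of $H$ appropriately, a family of $\gcd(q,P_i)$ parallel identically oriented simple closed curves of total class $P_i f_i + q b_i$. Two such families on a torus with the same oriented homology class are isotopic; extending these isotopies through a collar of $\partial M_v$ makes $\partial H \cap T_i = C_i$ for each $i < n$, and we set $C_n := \partial H \cap T_n$.

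The main obstacle is the simultaneous matching of all $n-1$ given boundary conditions by a single map $\phi$. This is made possible by the common-fiber-intersection hypothesis, which forces the same exponent $q$ of $t$ in $\phi$ and reduces the remaining compatibility on the base to the single linear relation $\sum_i \deg(g|_{\partial_i F_v}) = 0$; this one-dimensional obstruction is absorbed by our freedom to choose the as-yet-unspecified slope on $T_n$.
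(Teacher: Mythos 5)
Your argument is correct, but note that the paper itself offers no proof of this lemma: it is stated as a ``version of'' Wang--Yu's Lemma~1.1 and simply cited. So you are not reproducing the paper's route but supplying a genuine (and standard) one: realizing the horizontal surface as a regular level set of a map $\phi\colon S^1\times F_v\to S^1$ that is a degree-$q$ cover on each fiber, with the base map $g$ chosen by the cohomological fact that the restriction $H^1(F_v;\Z)\to H^1(\partial F_v;\Z)$ has image the ``degrees sum to zero'' subgroup. This is cleaner and more conceptual than the cut-and-paste construction in Wang--Yu (oriented sums of vertical annuli with parallel copies of a horizontal fiber), at the cost of using the product structure $M_v=S^1\times F_v$ explicitly --- which is legitimate here, since the lemma is only invoked after the paper has passed to a simple cover. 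Two points in your write-up deserve to be made explicit. First, the hypothesis that the intersection number with the fiber is the \emph{same signed} integer $q$ for all $i$ is exactly what lets a single global choice of orientation (equivalently, coorientation) of $H=\phi^{-1}(1)$ make the boundary orientation agree with the given orientation of $C_i$ on every $T_i$ simultaneously; with opposite signs on different tori the statement would fail, so this is where the hypothesis is genuinely consumed. Second, the final isotopy matching $\phi^{-1}(1)\cap T_i$ with $C_i$ in a collar should be taken through curves transverse to the fibers (e.g.\ first straighten both families to linear representatives in a flat structure on $T_i$) so that horizontality is not destroyed near the boundary; this is routine but worth a sentence. With those remarks your proof is complete and correct.
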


\begin{proof}[Proof of Proposition~\ref{prop:suff exists}.]
Let $\Gamma$ be the underlying graph of $M$. A vertex $w$ of $\Gamma$ is called a \emph{boundary vertex} if its block $M_w$ has a torus boundary component contained in $\partial M$. Note that a boundary vertex exists since $\partial M$ is non-empty.

We first pass to a finite cover of $M$ that is simple (see Section~\ref{sec:not}). Moreover, we shall pass to a finite cover whose underlying graph $\Gamma$ has the following property:

\begin{description}
\item[(antennas)]
For each pair of adjacent vertices $v_0, v_1\in \Gamma$ there is an embedded edge-path $(v_0,v_1,v_2,\ldots, v_n)$ such that:
\begin{enumerate}[(i)]
\item
the subpath $(v_1,v_2,\ldots, v_n)$ is a full subgraph (i.e.\ induced subgraph) of $\Gamma$,
\item
$v_n$ is a boundary vertex.
\end{enumerate}
\end{description}

We will first construct a sufficient collection under the assumption that (antennas) property holds. We later explain how to pass to a cover satisfying (antennas).

As discussed in the introduction, it suffices to obtain property (1) of a sufficient collection. Let $B=M_{v_0}$ be a block and let $T$ be a torus in its boundary. Let $C_0$ be the circle on $T$ that is vertical with respect to $B$. If $T$ is a boundary torus of the whole $M$, then we put $n=0$, otherwise we define $v_1$ so that $M_{v_1}$ is the block distinct from $M_{v_0}$ containing $T$. Applying (antennas), we obtain an edge-path satisfying (i) and (ii). We will find a properly embedded surface $S_n$ intersecting $T$ along circles in the direction of $C_0$.

For $i=0$ to $n$ we inductively define surfaces $S_0\subset \cdots \subset S_i\subset \cdots \subset S_n$ embedded in $M$, but not necessarily properly: $S_i$ might have boundary components in $M_{v_i}\cap M_{v_{i+1}}$.

We define the surface $S_0$ to be the vertical annulus in $B=M_{v_0}$ joining $T$ to itself, not separating $M_{v_0}$ (this uses that $M$ is simple). If $n=0$, then we are done.

Otherwise, let $i\geq 1$ and assume that $S_{i-1}$ has already been constructed but is not proper. Let $C_{i-1}$ denote one of the boundary circles of $S_{i-1}$ in $M_{v_{i-1}}\cap M_{v_i}$. If $C_{i-1}$ is vertical in $M_{v_i}$, then we can complete $S_{i-1}$ immediately to $S_n$ by adding several vertical annuli in $M_{v_i}$.

Otherwise, let $\mathcal{E}$ be the family of all edges adjacent to $v_i$, distinct from the edges joining it to $v_{i-1}$ and $v_{i+1}$ (if it is defined). Since by (antennas) property the path $(v_j)_{j=1}^n$ is full, all the edges in $\mathcal{E}$ join $v_i$ to a vertex outside the path $(v_j)_{j=1}^n$. Similarly as we have done for the edge $(v_1,v_0)$, for each edge $e=(v_i,w)\in \mathcal{E}$ we take a vertical annulus $A_e$ in $M_w$ joining the boundary torus of $M_w$ corresponding to $e$ to itself. Again we require that $A_e$ does not separate $M_w$ and because of that we can take it disjoint from all the annuli in $M_w$ constructed for smaller values of $i$, assigned to other boundary components (see Figure~\ref{fig:annuli}).

\begin{figure}[ht]
\begin{center}
\includegraphics[width=4cm]{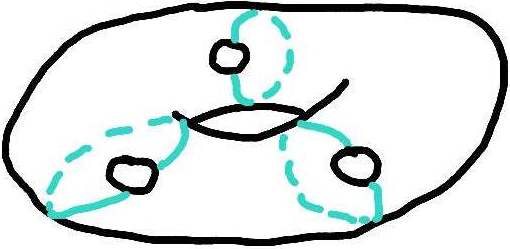}
\end{center}
\caption{bases of $A_e$ in common $M_w$ from different $i$ }
\label{fig:annuli}
\end{figure}

The annuli $A_e$ specify circles $C_e$ on the tori $M_w\cap M_{v_i}$. Thus far, for all but one (or all if $i=n$) boundary tori of $M_{v_i}$ that are not in the boundary of $M$ we have constructed non-vertical circles $C_{i-1}$ or $C_e$. For all the boundary tori $K$ of $M$ in $M_{v_i}$, except for one (call it $Q$) when $i=n$, we pick arbitrary horizontal circles $C_K$.

By Lemma~\ref{lem: finding horizontal}, if we take appropriate orientations on the
circles $C_{i-1},C_e,C_K$ and we take appropriately many copies, we can find an oriented
circle $C_i$ on the remaining boundary torus of $M_{v_i}$ (connecting to $M_{v_{i+1}}$, or
being $Q$), such that appropriately many copies of $C_i$ together with the copies of $C_{i-1},C_e$ and $C_K$ bound an embedded horizontal surface $H_i$.

Taking appropriately many copies of $A_e, S_{i-1}$ and $H_i$ we form the surface $S_i$. If it is non-orientable, we replace it by the boundary of its regular neighborhood.

Inductively, we arrive at the required surface $S_n$ needed for property (1) of a sufficient collection. See Figure~\ref{fig:antenna}.

\begin{figure}[ht]
\begin{center}
\includegraphics[width=7cm]{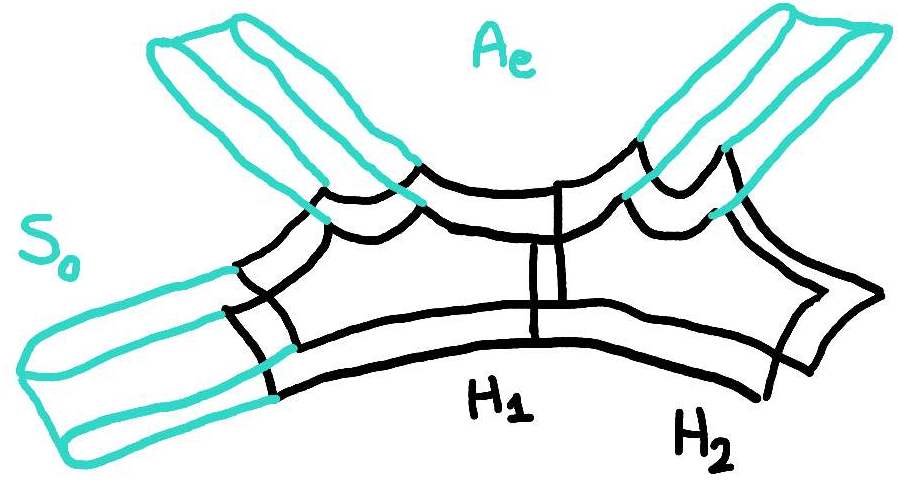}
\end{center}
\caption{the surface $S_2$}
\label{fig:antenna}
\end{figure}

\smallskip

It remains to explain how to obtain property (antennas). Fullness is automatic if:
\begin{itemize}
\item
$\Gamma$ has no double edges or edges joining a vertex to itself (this is attained using residual finiteness of $\pi_1\Gamma$) and
\item
the path $(v_i)_{i=1}^n$ is always chosen to be geodesic.
\end{itemize}

It thus suffices to pass to $\Gamma$ where for each vertex $v_1$ there is a geodesic terminating at a boundary vertex $v_n$ that does not pass through a prescribed neighbor $v_0$ of $v_1$.

To do this, we take the following degree $2^k$ cover $\widehat{M}$ of $M$, where $k$ is the number of blocks of $M$. The cover $\widehat{M}$ is defined by the mapping of $H_1(M,\Z)$ into $\Z_2^k$ determined by the cohomology classes of closed non-separating vertical tori, one in each of the $k$ blocks.
Let $\widehat{\Gamma}$ the underlying graph of the graph manifold $\widehat{M}$. See Figure~\ref{fig:graph}.

\begin{figure}[ht]
\includegraphics[width=10cm]{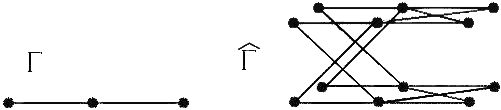}
\caption{the graphs $\Gamma$ and $\widehat{\Gamma}$}
\label{fig:graph}
\end{figure}

Fix vertices $v_0,v_1\in \widehat{\Gamma}$ and let $\gamma$ be a geodesic path in $\widehat{\Gamma}$ from $v_1$ to a boundary vertex $v_n$. If $\gamma$ passes through $v_0$, then we alter it as follows. Let $g$ denote the nontrivial element of the group of covering transformations of $\widehat{M}$ fixing $v_1$. Then $g$ maps $\gamma$ to a geodesic path disjoint from $v_0$ terminating at a boundary vertex. This shows that $\widehat{M}$ satisfies (antennas) and completes the proof of Proposition~\ref{prop:suff exists}.
\end{proof}

We record the following consequence of the proof of Proposition~\ref{prop:suff exists}, which will be used in \cite{PWmixed}.

\begin{cor}
Let $M$ be a graph manifold with nonempty boundary. There exists a finite cover  $\widehat{M}$ of $M$ such that
for each circle $C_0$ in a boundary torus $T\subset \widehat{M}$
there is an incompressible surface $S$ embedded in $\widehat{M}$ with $S\cap T$ consisting of a
nonempty set of circles parallel to $C_0$.
\end{cor}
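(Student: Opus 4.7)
The plan is to adapt the construction in the proof of Proposition~\ref{prop:suff exists}, replacing the initial vertical annulus by a horizontal piece whenever the prescribed slope $C_0$ is not vertical. Take $\widehat M$ to be the same finite cover built there: simple, with underlying graph satisfying the (antennas) property. Fix a boundary torus $T\subset\partial\widehat M$, contained in a block $B=M_{v_0}$, and an arbitrary circle $C_0\subset T$.

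If $C_0$ is vertical with respect to $B$, then the case $n=0$ of the construction in Proposition~\ref{prop:suff exists} applies directly: the required surface $S$ is a non-separating vertical annulus in $B$ joining $T$ to itself, and $S\cap T$ is a pair of circles parallel to $C_0$.

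If $C_0$ is not vertical, let $q\ne 0$ be its oriented intersection number with the fiber of $B$. I would replace the initial vertical annulus by a horizontal piece $H_0\subset B$ whose boundary on $T$ consists of $k$ copies of $C_0$ for a suitable $k$. Such an $H_0$ is supplied by Lemma~\ref{lem: finding horizontal} after permuting the roles of the boundary tori: place $k$ copies of $C_0$ on $T$ and circles of matching fiber-intersection number $kq$ on all but one of the remaining boundary tori of $B$, and let the lemma produce matching circles on the last boundary torus together with an oriented horizontal surface bounded by the full family. From $H_0$ I would then run the antenna propagation of Proposition~\ref{prop:suff exists} essentially verbatim, now carried out simultaneously on each internal JSJ boundary torus of $B$: follow each antenna chain to a boundary vertex, alternating non-separating vertical annuli in intermediate blocks with horizontal pieces produced by Lemma~\ref{lem: finding horizontal}, taking appropriately many parallel copies of each piece so that the common fiber-intersection hypothesis of the lemma is preserved at every step. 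If the resulting embedded surface is non-orientable I would pass to the boundary of its regular neighborhood.

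The outcome is a properly embedded incompressible surface $S\subset\widehat M$ with $S\cap T$ a nonempty union of circles parallel to $C_0$. Incompressibility holds piecewise: horizontal pieces are $\pi_1$-injective in their blocks, non-separating vertical annuli are essential since $\widehat M$ is simple, and the pieces glue along essential curves on the JSJ tori. The main obstacle I expect is the combinatorial bookkeeping: multiplying parallel copies so that the fiber-intersection hypothesis of Lemma~\ref{lem: finding horizontal} is preserved inductively, and choosing the vertical annuli in blocks that lie on more than one antenna to be pairwise disjoint. This is exactly the accounting already carried out in the proof of Proposition~\ref{prop:suff exists}; the only new input is that the initial piece encodes an arbitrary starting slope $C_0$ on $T$ rather than the vertical one.
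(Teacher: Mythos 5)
Your overall strategy is the intended one: the corollary is recorded precisely because the proof of Proposition~\ref{prop:suff exists} already has the flexibility to realize an arbitrary slope on a boundary torus --- the vertical slope via the non-separating vertical annulus (your first case is exactly the paper's $n=0$ case), and a non-vertical slope by feeding $k$ parallel copies of $C_0$ into Lemma~\ref{lem: finding horizontal} as one of the freely prescribable input families.

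The execution of the non-vertical case, however, departs from the mechanism of the proposition's proof in a way that leaves a genuine gap. Lemma~\ref{lem: finding horizontal} leaves only \emph{one} undetermined output torus per block; on every other boundary torus the circles are inputs you must choose. The proposition's proof exploits this by choosing, on each side JSJ torus $B\cap M_w$, input circles that are vertical in the adjacent block $M_w$ (hence non-vertical in $B$, as adjacent Seifert fibers differ), so they are capped by one-block-deep vertical annuli $A_e\subset M_w$ and never propagate; only the single output torus continues the chain. Your plan to run antenna chains ``simultaneously on each internal JSJ boundary torus of $B$,'' with horizontal pieces along each, is not supported by the lemma (each block admits only one free boundary torus) and the chains could collide in a common block. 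Moreover, even with a single chain, placing a horizontal piece in the root block $v_0=B$ loses the protection of the fullness clause of (antennas), which covers only $(v_1,\ldots,v_n)$: a later path block $M_{v_j}$ may be adjacent to $v_0$, and then the vertical annulus forced into $M_{v_j}$ by $v_0$'s side torus meets the horizontal piece $H_j$ there. The route consistent with the paper's bookkeeping is to let $B$ play the role of a path block $M_{v_i}$ with $i\geq 1$ (rooting the antenna at a vertical annulus in a neighboring block): the proof already picks ``arbitrary horizontal circles $C_K$'' on the boundary tori of $M$ met in such blocks, and one simply takes $C_K$ to be $k$ parallel copies of $C_0$ on $T$.
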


\section{Separability: preliminaries}
\label{sec:block_separation}

This section prepares the background for the proofs of Theorems~\ref{thm:separability} and~\ref{thm:double_separability}.

\subsection*{Hempel's theorem}

We begin with discussing consequences of Hempel's theorem:
\begin{theorem}[special case of {\cite[Thm~1.1]{He}}]
\label{thm:Hempel}
Fundamental groups of graph manifolds are residually finite.
\end{theorem}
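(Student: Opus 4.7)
The plan is to deduce residual finiteness of $\pi_1M$ from the graph-of-groups structure provided by the JSJ decomposition, by combining residual finiteness of each Seifert-fibered vertex group with the good separability of the abelian edge groups.

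First I would reduce to the simple case: using \cite[Prop~4.4]{LW}, cited in Section~\ref{sec:not}, pass to a finite cover in which every block is a product $S^1\times F_v$. Since the class of residually finite groups is closed under passage to a finite overgroup (intersecting conjugates of the finite-index subgroup produces a normal finite-index subgroup that inherits residual finiteness), it suffices to establish the theorem for simple graph manifolds. In this setting each vertex group is $\pi_1M_v\cong \Z\times \pi_1F_v$, which is residually finite since surface groups are, and each edge group is $\Z^2$, which is trivially so.

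Given a nontrivial $g\in \pi_1M$, I would consider its action on the Bass--Serre tree $T$ dual to the JSJ decomposition and split into two cases. If $g$ acts hyperbolically on $T$, then its translation axis projects to a non-nullhomotopic immersed loop in the underlying graph $\Gamma$, so $g$ has nontrivial image in the free group $\pi_1\Gamma$; residual finiteness of free groups then yields a finite quotient of $\pi_1\Gamma$, and hence of $\pi_1M$, which detects $g$. If instead $g$ fixes a vertex $\tilde v$ of $T$, then $g$ lies, after conjugation, in a vertex group $\pi_1M_v$, and residual finiteness of $\pi_1M_v$ supplies a finite quotient $\phi_v\colon \pi_1M_v \to Q_v$ with $\phi_v(g)\neq 1$.

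The main obstacle is to extend such a local quotient $\phi_v$ to a finite quotient of all of $\pi_1M$. My plan is to induct on the blocks, extending $\phi_v$ across each incident edge of $\Gamma$: the edge group $\Z^2$ maps into $Q_v$, and since every subgroup of $\Z^2$ is an intersection of finite-index subgroups, I can choose compatible finite quotients $\phi_w$ of each neighboring vertex group $\pi_1M_w$ that agree with $\phi_v$ on the common edge group, passing to a further common refinement if needed. Propagating this through $\Gamma$ gives a graph of finite groups through which $\pi_1M$ maps to a virtually free group in which $g$ survives. The delicate point, which is the heart of Hempel's argument, is ensuring consistency around cycles of $\Gamma$ and across the central $\Z$-fibers of the different blocks; this is handled by fixing in advance a common finite quotient of the fiber $\Z$ in each block and aligning the quotients of the adjacent surface-group factors through a global choice of a coherent system of finite-index subgroups of the edge $\Z^2$'s before performing the extension.
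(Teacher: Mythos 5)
First, note that the paper does not prove this statement at all: it is quoted as a special case of Hempel's theorem and simply cited, so there is no internal proof to compare against. Your outline is in the spirit of Hempel's actual argument (build a graph of finite groups by choosing finite quotients of the Seifert vertex groups that agree on the $\Z^2$ edge groups), but as written it has a genuine error and leaves the hard step unproved.

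The error is in the hyperbolic case. An element acting hyperbolically on the Bass--Serre tree need \emph{not} have nontrivial image in $\pi_1\Gamma$: the axis projects to a closed edge-path in $\Gamma$ that can be null-homotopic, or $\Gamma$ can even be a tree. For instance, a graph manifold with two blocks glued along one JSJ torus has $\pi_1M=A*_{\Z^2}B$ and $\pi_1\Gamma=1$, yet $ab$ with $a\in A\setminus\Z^2$, $b\in B\setminus\Z^2$ is hyperbolic. So your argument says nothing about such elements, and they are exactly the ones for which Hempel's theorem is nontrivial: one must produce a finite quotient of the graph of groups in which the normal form of $g$ survives, i.e.\ in which each syllable lying outside the adjacent edge groups stays outside the images of those edge groups. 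That requires separability of the edge $\Z^2$'s in the vertex groups, not just residual finiteness of the vertex groups. Relatedly, your final paragraph (``a global choice of a coherent system of finite-index subgroups of the edge $\Z^2$'s'') is precisely where the content lies and is only gestured at: an arbitrary finite-index subgroup of a boundary $\Z^2$ need not extend to a finite cover of the adjacent block, and refining quotients edge by edge risks an unending chase around cycles of $\Gamma$. The standard repair, which this paper itself uses in Section~4 (Corollary~\ref{cor:vertical separable}), is to fix $n$ in advance and take the $n$--characteristic cover of every block; since a simple block retracts onto its boundary tori, these covers restrict to $n$--characteristic covers of the JSJ tori and therefore glue consistently. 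Without that (or an equivalent device), and without a correct treatment of hyperbolic elements, the proposal does not establish the theorem.
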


\begin{cor}
\label{cor:vertical separable}
If $T$ is an incompressible vertical torus in a block of a simple graph manifold $M$, then
$\pi_1T$ is separable in $\pi_1M$. If $T$ is a JSJ or boundary torus, then any finite index
subgroup of $\pi_1T$ is separable in $\pi_1M$.
\end{cor}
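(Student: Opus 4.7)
The plan is to combine separability inside a single block, which follows from the LERF property of free groups, with Hempel's residual finiteness theorem (Theorem~\ref{thm:Hempel}) to propagate along the JSJ graph of groups of $M$.

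First I would work inside the block $B$ containing $T$. Simplicity of $M$ gives $B=S^1\times F_v$ with $F_v$ a surface with boundary, so $\pi_1F_v=F_k$ is free and $\pi_1B=\Z\times F_k$. An incompressible vertical torus $T\subset B$ corresponds to an essential simple closed curve $c\subset F_v$, yielding $\pi_1T=\Z\times\langle c\rangle$. Since free groups are LERF (M.~Hall), $\langle c\rangle$ is separable in $F_k$, and hence $\pi_1T$ is separable in $\pi_1B$ by taking products of finite index subgroups. The stronger engulfing property---that for each finite index $K\le\pi_1T$ there is a finite index $H\le\pi_1B$ with $H\cap\pi_1T\le K$---follows from the fact that for every $N\geq 1$ there is a finite index subgroup $H_0\le F_k$ with $H_0\cap\langle c\rangle=\langle c^N\rangle$. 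When $T$ is a JSJ or boundary torus, $c$ is a boundary curve and $\langle c\rangle$ is even a free factor of $F_k$, making the engulfing especially transparent.

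Next I would upgrade to $\pi_1M$. Given $g$ outside the target subgroup (either $\pi_1T$ or its finite index subgroup $K$), Hempel's construction behind Theorem~\ref{thm:Hempel} assembles a finite quotient of $\pi_1M$ from compatible finite quotients of the vertex groups $\pi_1B_v$ that agree on each edge group $\pi_1T_e$ of the JSJ graph of groups. I would initialize this construction at $B$ using the engulfing $H_B$ from the previous paragraph, and then propagate outward along the JSJ graph. If $g\in\pi_1B$, the engulfing directly separates $g$ from $K$ in the resulting finite quotient. If $g\notin\pi_1B$, then $g$ moves the edge of the Bass--Serre tree stabilized by $\pi_1T$, and the assembled finite quotient detects this displacement via its action on the underlying finite graph.

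The main obstacle is this compatible propagation step: one must simultaneously choose finite quotients of all blocks so that they restrict to the same quotient on each JSJ torus, while respecting both the engulfing at $B$ and the separation of $g$. This is the technical heart of Hempel's argument, which goes through in the present setting because each vertex group $\pi_1B_v=\Z\times F_k$ admits an abundant supply of finite quotients with prescribable restrictions to its peripheral $\Z^2$ subgroups, so any finite compatibility system can be satisfied.
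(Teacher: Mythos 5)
Your block-level analysis is fine: $\pi_1T=\Z\times\langle c\rangle$ sits in $\pi_1B=\Z\times F_k$, Hall's theorem separates $\langle c\rangle$ in $F_k$, and the engulfing of a finite index subgroup of $\pi_1T$ inside $\pi_1B$ works as you describe. The genuine gap is the global step, which you defer entirely to ``Hempel's construction goes through.'' Hempel's theorem produces residual finiteness, not subgroup separability, and upgrading the former to the latter is precisely the content that needs proof. Two concrete problems. First, graph manifold groups are in general \emph{not} LERF (the Burns--Karrass--Solitar non-separable example lives in a graph manifold group), so the template ``separability in each vertex group plus compatible finite quotients of the blocks'' cannot be a valid mechanism as stated; a correct argument must exploit something special about $\pi_1T$ beyond its separability in $\pi_1B$, and your sketch never identifies what that is. Second, in the case $g\notin\pi_1B$, the claim that the assembled quotient ``detects this displacement via its action on the underlying finite graph'' is not an argument: in a given finite quotient the image of $g$ may perfectly well land in the image of $\pi_1T$, and ruling this out is exactly the separability statement being proved. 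Doing it honestly requires tracking the normal form of $g$ in the graph of groups and arranging that the relevant edge-group cosets stay distinct in the quotient, none of which is supplied by ``any finite compatibility system can be satisfied.''

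The paper sidesteps the combination entirely. For the first statement it observes that $\pi_1T$ is a maximal abelian subgroup of $\pi_1M$, hence the intersection of the centralizers of its elements; centralizers of elements are separable in any residually finite group, so Theorem~\ref{thm:Hempel} is used only as a black box. For the second statement, a finite index subgroup $H\le\pi_1T$ contains some $n\Z\times n\Z$; one passes to the cover of $M$ glued from the $n$--characteristic covers of the blocks, which match up because each simple block retracts onto its boundary tori and therefore its $n$--characteristic cover restricts to the $n$--characteristic cover of each boundary torus --- this is where the compatibility you worry about is handled, cleanly and for free. In that cover $n\Z\times n\Z$ is again the fundamental group of a torus, hence maximal abelian and separable, and $H$ is a finite union of its cosets. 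If you want to salvage your route you would have to prove a decorated version of Hempel's gluing argument with the engulfing and normal-form control built in; that is substantially more work than the paper's two-line reduction, and as written your proposal does not contain it.
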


The proof of Corollary~\ref{cor:vertical separable} uses characteristic covers. The \emph{$n$--characteristic} cover of a manifold $B$
is the finite cover corresponding to the intersection of all subgroups of $\pi_1B$ of index $n$.
For example, the $n$--characteristic cover of a torus $T$ corresponds to $n\Z\times n\Z\subset \Z\times \Z=\pi_1T$.
If $B$ is a simple block, then since it retracts onto its boundary tori, its
$n$--characteristic cover restricts to $n$--characteristic covers over its boundary tori.

\begin{proof}[Proof of Corollary~\ref{cor:vertical separable}]
The group $\pi_1T$ is separable in $\pi_1M$ since it is a maximal abelian subgroup and
$\pi_1M$ is residually finite. A finite index subgroup $H\subset \pi_1T=\Z\times \Z$
is contained in some $n\Z\times n\Z$. Hence it suffices to consider a finite cover $M'$ of $M$
formed by gluing $n$--characteristic covers of the blocks. Since $n\Z\times n\Z$ is separable in
$\pi_1M'$, we have that $H$ is separable in $\pi_1M$.
\end{proof}

We now prove the analogous result for annuli.

\begin{cor}
\label{cor:vertical_annuli}
Let $T$ be a JSJ or boundary torus in a graph manifold $M$. Then every cyclic subgroup
of $\pi_1T$ is separable in $\pi_1M$.
\end{cor}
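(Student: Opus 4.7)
The plan is to reduce the statement to Corollary~\ref{cor:vertical separable} by splitting into two cases based on whether the element $g \in \pi_1 M$ to be separated from the cyclic subgroup $C \subseteq \pi_1 T$ lies in $\pi_1 T$.

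First, suppose $g \notin \pi_1 T$. Since $T$ is a JSJ or boundary torus, $\pi_1 T$ itself is separable in $\pi_1 M$ by Corollary~\ref{cor:vertical separable} (it is a finite-index subgroup of itself). Thus there is a finite-index $H \leq \pi_1 M$ with $g \notin H\pi_1 T$, and since $C \subseteq \pi_1 T$ we conclude $g \notin HC$.

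Now suppose $g \in \pi_1 T \smallsetminus C$. Here the idea is to first separate $g$ from $C$ inside the torus group $\pi_1 T \cong \Z \times \Z$, and then promote the separating subgroup to a finite-index subgroup of $\pi_1 M$ using Corollary~\ref{cor:vertical separable}. Since $\pi_1 T / C$ is a finitely generated abelian group (isomorphic to $\Z$ or to $\Z \oplus \Z/k\Z$ depending on whether $C$ is generated by a primitive element), it is residually finite, so the nontrivial image of $g$ in $\pi_1 T / C$ avoids some finite-index subgroup. Pulling this subgroup back yields a finite-index $K \leq \pi_1 T$ with $C \subseteq K$ and $g \notin K$. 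By Corollary~\ref{cor:vertical separable}, $K$ is separable in $\pi_1 M$, so there exists a finite-index $H \leq \pi_1 M$ with $g \notin HK$. Since $C \subseteq K$, this gives $HC \subseteq HK$ and therefore $g \notin HC$, as required.

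There is no real obstacle; the only point worth noting is that one must treat $g \in \pi_1 T$ separately, since for such $g$ the separability of $\pi_1 T$ is not by itself enough, and one must exploit the freedom to choose a proper finite-index subgroup $K$ of $\pi_1 T$ containing $C$ — which is precisely what the finite-index version of Corollary~\ref{cor:vertical separable} allows.
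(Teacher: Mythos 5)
Your proof is correct and follows essentially the same route as the paper's: the paper's one-line argument simply produces a finite-index subgroup $H\subset\pi_1T$ containing the cyclic subgroup but not $g$ (which covers both of your cases at once, taking $H=\pi_1T$ when $g\notin\pi_1T$) and then invokes Corollary~\ref{cor:vertical separable}. Your version just makes the case split and the residual finiteness of $\pi_1T/C$ explicit.
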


\begin{proof}
Let $\Z$ be a cyclic subgroup of $\pi_1T$ and let $g\in \pi_1M-\Z$. There is a finite index subgroup $H\subset\pi_1T$
containing $\Z$, but not $g$. We apply Corollary~\ref{cor:vertical separable} to $H$.
\end{proof}

Corollary~\ref{cor:vertical separable} has two further consequences.

\begin{cor}
\label{cor:straight}
Let $S\subset M$ be an incompressible surface in a graph manifold. Then there is a finite cover of $M$ where each elevation of $S$ is \emph{straight}, in the sense that its vertical annular pieces always join two distinct boundary components of the block.
\end{cor}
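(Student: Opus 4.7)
The plan is to use separability of $\pi_1 T$ in $\pi_1 M$ for boundary and JSJ tori (Corollary~\ref{cor:vertical separable}) to produce a finite cover in which each ``bad'' vertical annular piece of $S$ becomes straight. After passing to the simple cover from Section~\ref{sec:not}, every block has the form $M_v = S^1 \times F_v$ and every vertical piece of $S \cap M_v$ corresponds to an essential properly embedded arc $\alpha \subset F_v$. I call such a piece \emph{bad} if $\alpha$ has both endpoints on a single boundary circle $c \subset \partial F_v$; compactness of $S$ ensures there are only finitely many bad pieces across all blocks.

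For each bad piece $A \subset M_v$ lying on a torus $T \subset \partial M_v$, concatenate $\alpha$ with a sub-arc of $c$ joining its endpoints to obtain a loop $\gamma \in \pi_1 F_v$. Essentiality of $\alpha$ forces $\gamma \notin \langle c \rangle$, and since $\pi_1 T \cap \pi_1 F_v = \langle c \rangle$ inside $\pi_1 M_v = \langle f \rangle \times \pi_1 F_v$, viewing $\gamma$ as an element of $\pi_1 M$ gives $\gamma \notin \pi_1 T$. By Corollary~\ref{cor:vertical separable} there is a finite index subgroup $G_A \leq \pi_1 M$ containing $\pi_1 T$ with $\gamma \notin G_A$; since $\pi_1 T \leq G_A$ we have $G_A \cdot \pi_1 T = G_A$, so equivalently $\gamma \notin G_A \cdot \pi_1 T$. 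Replacing $G_A$ by its normal core can only shrink $G_A \cdot \pi_1 T$, so $\gamma$ still avoids it.

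Let $G$ be the intersection of these normal cores over the finitely many bad pieces; it is a finite index normal subgroup of $\pi_1 M$. In the cover $M' = G \backslash \widetilde M$, an arbitrary lift $h \widetilde A \subset \widetilde M$ of a bad annulus $A$ has its two boundary lines on planes $h \widetilde T$ and $h\gamma \widetilde T$; these project to the same torus of $M'$ iff $\gamma \in h^{-1} G h \cdot \pi_1 T = G \cdot \pi_1 T$, which fails by our construction. Good pieces remain straight in any cover, since their arcs already have endpoints on disjoint boundary circles of $F_v$. Hence every vertical annular piece of every elevation of $S$ into $M'$ joins two distinct boundary components of its block.

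The main obstacle I anticipate is ensuring that \emph{every} lift of a bad annulus becomes straight, not merely a distinguished one; this is what forces the normality of $G$, via $h^{-1} G h = G$, and it is why one must take the normal core rather than simply intersect the $G_A$'s. Translating the geometric condition ``$A$ is bad'' into the algebraic statement $\gamma \notin \pi_1 T$ is the other point requiring a moment of care; after both are settled, the construction reduces to a single application of Corollary~\ref{cor:vertical separable}.
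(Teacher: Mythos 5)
Your proof is correct and follows exactly the route the paper intends: the corollary is stated without proof as a consequence of Corollary~\ref{cor:vertical separable}, and your argument --- passing to the simple cover, translating ``bad'' vertical annuli into elements $\gamma \notin \pi_1T$, and separating each $\gamma$ from $\pi_1T$ in a normal finite-index subgroup so that \emph{every} lift becomes straight --- is precisely the standard way to fill in that omission. The only cosmetic point is that after passing to the simple cover one should enumerate the bad pieces of the (still compact) full preimage of $S$ rather than of $S$ itself; this does not affect the argument.
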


\begin{cor}
\label{cor:gamma does not backtrack}
Let $\gamma$ be a path in a graph manifold $M$ such that its lift $\widetilde{\gamma}$ to the universal cover $\widetilde{M}$ passes through as few blocks of $\widetilde{M}$ as possible in its path-homotopy class. Then there is a finite cover $M'$ of $M$, where $\gamma'$ (the quotient of $\widetilde{\gamma}$) does not pass through the same JSJ torus more than once.
\end{cor}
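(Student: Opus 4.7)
My plan is to analyze $\widetilde{\gamma}$ via the Bass--Serre tree of $\widetilde{M}$ (vertices are blocks, edges are JSJ tori), and then invoke separability of $\pi_1 T$ for JSJ and boundary tori $T$, supplied by Corollary~\ref{cor:vertical separable}.

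First I would show that minimality of $\widetilde{\gamma}$ in its path-homotopy class forces the sequence of blocks it visits to be a geodesic in this tree. If the walk immediately backtracked through some torus $\widetilde{T}$, the sub-arc of $\widetilde{\gamma}$ entering and leaving the intermediate block through $\widetilde{T}$ could be homotoped within $\widetilde{M}$ (simply connected) to an arc lying in $\widetilde{T}$, strictly reducing the number of blocks traversed --- a contradiction. Consequently $\widetilde{\gamma}$ crosses a finite sequence $\widetilde{T}_1,\ldots,\widetilde{T}_n$ of pairwise distinct JSJ tori of $\widetilde{M}$, each exactly once. Write $T_j\subset M$ for the image of $\widetilde{T}_j$ and $F_j=\mathrm{Stab}(\widetilde{T}_j)\leq\pi_1 M$, a conjugate of $\pi_1 T_j$. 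For every pair $j\neq k$ with $T_j=T_k$, fix $\alpha_{jk}\in\pi_1 M$ with $\alpha_{jk}\widetilde{T}_j=\widetilde{T}_k$; since the two lifts are distinct, $\alpha_{jk}\notin F_j$.

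For any cover $M'=H\backslash\widetilde{M}$, the images of $\widetilde{T}_j$ and $\widetilde{T}_k$ coincide in $M'$ precisely when $H\cap\alpha_{jk}F_j\neq\emptyset$, i.e.\ when $\alpha_{jk}\in HF_j$. I therefore need a finite-index $H\leq\pi_1 M$ with $\alpha_{jk}\notin HF_j$ for every relevant pair $(j,k)$. Since each $T_j$ is a JSJ or boundary torus of $M$, Corollary~\ref{cor:vertical separable} gives separability of $\pi_1 T_j$, and hence of its conjugate $F_j$, in $\pi_1 M$. This supplies a finite-index $H_{jk}\leq\pi_1 M$ with $\alpha_{jk}\notin H_{jk}F_j$ for each relevant pair; taking $H:=\bigcap_{j,k} H_{jk}$, a finite intersection of finite-index subgroups, yields the desired cover $M':=H\backslash\widetilde{M}$. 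In $M'$ the images of $\widetilde{T}_1,\ldots,\widetilde{T}_n$ are pairwise distinct, so $\gamma'$ crosses each exactly once, and in particular no JSJ torus of $M'$ is crossed twice. The only real difficulty is the bookkeeping in reducing to the coset condition $\alpha_{jk}\in HF_j$; once this dictionary is in place, the separability input from Corollary~\ref{cor:vertical separable} does the rest.
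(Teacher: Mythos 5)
Your proof is correct and is exactly the argument the paper intends: the paper states this corollary without proof as a direct consequence of Corollary~\ref{cor:vertical separable}, and your reduction (minimality forces a geodesic itinerary in the Bass--Serre tree, so the crossed JSJ planes are distinct lifts, and separability of the conjugates $F_j$ of the $\pi_1T_j$ separates the finitely many cosets $\alpha_{jk}F_j$ from a common finite-index subgroup $H$) is the standard way to fill it in. No gaps.
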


%Combining Corollary~\ref{cor:gamma does not backtrack} with Lemma~\ref{lem:torus separability} we obtain a special case of Theorem~\ref{thm:separability}.

\subsection*{Untwining arcs}
The following result will play a role in the proof of Theorem~\ref{thm:double_separability}.

\begin{proposition}
\label{prop:Hempel_simult}
Let $F$ be a surface with two distinguished boundary components $C_1,C_2$ joined by an embedded arc $\alpha$. Let $\mathcal{A}$ be a finite family of arcs properly embedded in $F$ with endpoints on $C_1$ and $C_2$. Then for each sufficiently large $n$ there is a cover $F^*$ of $F$ of degree $n!$ on each boundary component and satisfying the following:
\begin{itemize}
\item[(*)] There is a lift $\alpha^*$ of $\alpha$ such that, if $C^*_i$ are the elevations of $C_i$ through the endpoints of $\alpha^*$, then all of the lifts of the arcs in $\mathcal{A}$ starting from $C^*_i$ do not terminate at $C^*_j$, except possibly for arcs homotopic to $\alpha^*$ relative to the boundary circles (if $\mathcal{A}$ contains arcs homotopic to $\alpha$).
\end{itemize}
\end{proposition}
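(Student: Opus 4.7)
My plan is to reformulate (*) as a condition on the finite-index subgroup $K\subset \pi_1(F,p)$ corresponding to $F^*$, and then use freeness of $\pi_1 F$. Take $p$ to be the endpoint of $\alpha$ on $C_1$, $q$ the other endpoint, and let $c_1\in\pi_1(F,p)$ be the loop once around $C_1$ while $\tilde c_2=\alpha c_2\alpha^{-1}\in\pi_1(F,p)$ is the transport along $\alpha$ of the generator of $\pi_1(C_2,q)$. For each $\beta\in\mathcal{A}$ with endpoints $p'\in C_i$ and $q'\in C_j$, fix auxiliary paths from $p$ to $p'$ running along $C_1$ (if $i=1$) or along $\alpha$ followed by an arc in $C_2$ (if $i=2$), and similarly from $q'$ back to $p$; concatenating with $\beta$ yields a based loop $b_\beta\in\pi_1(F,p)$. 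A routine covering-space computation (lift the loop $b_\beta$ to $\widetilde F$ and descend to $F^*$) shows that a lift of $\beta$ starting on the elevation $C_i^*$ through the lifted endpoint of $\alpha^*$ terminates on $C_j^*$ precisely when the double coset $H_i\,b_\beta\,H_j$ meets $K$, where $H_1=\langle c_1\rangle$ and $H_2=\langle\tilde c_2\rangle$. Arcs homotopic to $\alpha$ rel $C_1\cup C_2$ correspond exactly to $b_\beta\in H_1H_2$, so (*) reduces to producing a $K$ of the prescribed boundary degree with $K\cap H_i b_\beta H_j=\emptyset$ for every non-exceptional $\beta$.

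Now $\pi_1 F$ is free since $F$ has non-empty boundary, and $c_1,\tilde c_2$ generate a rank-$2$ free subgroup because distinct boundary components of $F$ have trivially intersecting cyclic subgroups in $\pi_1 F$. By the Ribes--Zalesskii theorem, the product of two finitely generated subgroups of a free group is closed in the profinite topology; hence for each non-exceptional $\beta$ there is a finite-index normal subgroup $N_\beta\trianglelefteq \pi_1 F$ with $b_\beta N_\beta\cap H_i H_j=\emptyset$, and by normality this rearranges to $N_\beta\cap H_i b_\beta H_j=\emptyset$. Intersecting the finitely many $N_\beta$ yields a single finite-index normal $N_0\trianglelefteq\pi_1 F$ that avoids every bad double coset simultaneously.

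To install the boundary degree condition I would further intersect $N_0$ with the kernel of a homomorphism $\pi_1 F\to\prod_k(\Z/n!)$, one factor per boundary component, sending each boundary generator $c_k$ to a generator of its factor; when $n$ is large enough that $n!$ is divisible by the orders of the images of the $c_k$ in $\pi_1 F/N_0$, the resulting $K$ satisfies $K\cap\langle c_k\rangle=\langle c_k^{n!}\rangle$ while still missing every bad double coset. The cover $F^*$ corresponding to $K$, with $\alpha^*$ the lift of $\alpha$ from the basepoint above $p$, then realises (*).

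The main obstacle is the profinite closedness of $H_1H_2$ (and $H_iH_i$): Ribes--Zalesskii is the cleanest black box, but a more self-contained route would construct an explicit permutation representation $\pi_1 F\to S_n$ sending $c_1$ and $\tilde c_2$ to $n$-cycles and separating each $\bar b_\beta$ from $\langle\bar c_1\rangle\langle\bar{\tilde c_2}\rangle$ directly via Stallings foldings, which requires additional bookkeeping since $c_1$ and $\tilde c_2$ need not jointly extend to a free basis of $\pi_1 F$.
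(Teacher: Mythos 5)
Your reformulation of (*) as a double‑coset condition on the subgroup $K$ corresponding to $F^*$ is correct, and replacing the paper's route for the non‑exceptional arcs (the paper first uses M.~Hall separability of the rank‑two subgroup $\langle c_1,\tilde c_2\rangle$ to make $C_1^*,C_2^*$ map with degree one, so that only the single elements $\alpha\alpha_i^{-1}$ need to be excluded from the cover's subgroup) by Ribes--Zalesskii separability of the double coset $H_1H_2$ is a legitimate alternative. However, there are two genuine gaps. The first is in the boundary‑degree step: the homomorphism $\pi_1F\to\prod_k(\Z/n!)$ ``sending each boundary generator $c_k$ to a generator of its factor'' does not exist. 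In $H_1(F;\Z)$ the boundary classes satisfy $\sum_k[c_k]=0$, so for any abelian target the image of one boundary curve is forced to be minus the sum of the images of the others and cannot be prescribed independently. This is exactly the obstruction that Lemma~\ref{lem:omnipotent} is designed to circumvent; its proof requires non‑zero genus and a two‑stage, non‑abelian construction (a double cover in which each $C_k$ acquires two elevations, followed by cutting along arcs joining them). As written, your construction of $K$ breaks down at this point.

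The second gap is that your reduction of (*) silently weakens the statement for the exceptional arcs. If $\mathcal{A}$ contains an arc $\beta$ homotopic to $\alpha$, then (*) does not merely tolerate lifts of $\beta$ running from $C_1^*$ to $C_2^*$: it requires every such lift to be homotopic to the \emph{distinguished} lift $\alpha^*$. In your language this is the additional requirement $K\cap H_1H_2=(K\cap H_1)(K\cap H_2)$, which fails for a general finite‑index $K$ and which the paper secures through Remark~\ref{rem:cover good for alpha}. Ironically, your intended abelian quotient would have delivered this for free (since $\phi(c_1)$ and $\phi(c_2)$ would sit in independent factors), but once that quotient is replaced by the cover of Lemma~\ref{lem:omnipotent} the point must be argued separately; and the strong form of (*) is genuinely used later, in the ``finding $p$'' part of Step~2 of the proof of Theorem~\ref{thm:double_separability}. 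Both gaps are repairable --- invoke Lemma~\ref{lem:omnipotent} for the degrees and Remark~\ref{rem:cover good for alpha} for the exceptional arcs, checking that your double‑coset condition persists under the further cover --- but as written the proof is incomplete.
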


In the proof, we will need the following ``omnipotency'' lemma.
\begin{lemma}
\label{lem:omnipotent}
Let $F$ be a surface of non-zero genus. Assign a number $n_i>0$ to each boundary component $C_i$ of $F$. Then there is a finite cover $F^*$ of $F$ having degree $n_i$ on each component of the preimage of $C_i$.
\end{lemma}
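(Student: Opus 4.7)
The plan is to build the cover $F^*$ as a permutation cover associated with a homomorphism $\phi\colon \pi_1F \to S_d$ arranged so that, for each $i$, the permutation $\phi(c_i)$ has cycle type consisting of exactly $r_i := d/n_i$ disjoint $n_i$-cycles (with no fixed points or cycles of other lengths). Then in the corresponding $d$-fold cover, the components of the preimage of $C_i$ correspond to the orbits of $\langle \phi(c_i)\rangle$ acting on $\{1,\ldots,d\}$, so each maps to $C_i$ with degree exactly $n_i$, which is what the lemma demands.

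Because $F$ has non-empty boundary, $\pi_1F$ is free on generators $a_1,b_1,\ldots,a_g,b_g,c_1,\ldots,c_{k-1}$, where the $c_i$ represent boundary loops and $c_k$ is expressed in this basis by the surface relation
\[
c_1 c_2 \cdots c_k \;=\; [a_1,b_1][a_2,b_2]\cdots[a_g,b_g].
\]
First I would pick $d$ to be an even multiple of $\operatorname{lcm}(n_1,\ldots,n_k)$, large enough that $d\ge 5$ and each $r_i$ is even. Then I would choose for each $i\le k$ a permutation $\sigma_i \in S_d$ of cycle type $(n_i)^{r_i}$, set $\phi(c_i):=\sigma_i$ for $i<k$, and set $\phi(a_j):=\phi(b_j):=\mathrm{id}$ for $j\ge 2$. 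With these choices, the constraint $\phi(c_k)=\sigma_k$ imposed via the surface relation reduces to writing the single element $\omega := \sigma_1 \sigma_2 \cdots \sigma_k \in S_d$ as a commutator $[\phi(a_1),\phi(b_1)]$.

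The principal obstacle is this commutator realisation. The sign of $\sigma_i$ is $(-1)^{r_i(n_i-1)}$, so the total sign of $\omega$ is $(-1)^{kd-\sum_i r_i}$; the parity conditions arranged above make this $+1$, giving $\omega\in A_d$. I would then invoke Ore's theorem (every element of $A_d$, $d\ge 5$, is a single commutator in $A_d$) to produce $\phi(a_1),\phi(b_1)\in A_d$ with $[\phi(a_1),\phi(b_1)]=\omega$. This completes the definition of $\phi$. The permutation cover $F^*$ associated with $\phi$ (or any connected component of it, since restricting to a $\phi(\pi_1F)$-orbit in $\{1,\ldots,d\}$ preserves the property that every $\phi(c_i)$-orbit has size $n_i$) then satisfies the conclusion of the lemma.
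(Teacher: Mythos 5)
Your argument is correct, but it takes a genuinely different route from the paper. The paper's proof is purely homological: using the genus it takes the double cover $F_d$ dual to a non-separating simple closed curve $\beta$, so that each $C_i$ lifts to two circles $C_i^1,C_i^2$ of degree one, and then takes the finite abelian cover of $F_d$ determined by the classes in $H^1(F_d,\Z_{n_i})$ of disjoint arcs $\beta_i$ joining $C_i^1$ to $C_i^2$; each boundary loop over $C_i$ has intersection number $\pm1$ with $\beta_i$ and $0$ with the other arcs, which forces degree exactly $n_i$. You instead build a permutation cover: prescribe $\phi(c_i)=\sigma_i$ of cycle type $(n_i)^{d/n_i}$, check the parity bookkeeping so that $\omega=\sigma_1\cdots\sigma_k\in A_d$, and use the genus handle $a_1,b_1$ together with Ore's theorem to realize $\omega$ as a single commutator; passing to a $\phi(\pi_1F)$-orbit handles connectivity, and since every cycle of every $\phi(c_i)$ has length exactly $n_i$, every component over $C_i$ has degree $n_i$. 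Both proofs use the genus hypothesis essentially (the paper for the curve $\beta$, you for the commutator). The trade-offs: your argument leans on Ore's theorem (every element of $A_d$, $d\ge5$, is a commutator), which is a nontrivial input, whereas the paper's construction is elementary; more importantly, the paper's explicit cover carries the extra feature recorded in Remark~\ref{rem:cover good for alpha} --- no two lifts of an arc $\alpha$ joining distinct boundary components of $F$ join the same pair of boundary components of $F^*$ --- which is invoked in the proof of Proposition~\ref{prop:Hempel_simult}. Your cover gives no control over how such arcs lift, so while it proves the lemma as stated, it could not be substituted for the paper's construction without also reproving that remark.
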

\begin{proof}
Since $F$ has non-zero genus, there is a non-separating simple closed curve $\beta\subset F$. Take the double cover $F_d$ determined by the cohomology class $[\beta]\in H^1(F,\Z_2)$. Each boundary component $C_i$ of $F$ lifts to a pair of boundary components $C^1_i,C^2_i$ of $F_d$. Choose a family of disjoint simple arcs $\beta_i$ joining $C^1_i$ to $C_i^2$. Take the cover $F^*$ determined by the mapping of $H_1(F_d,\Z)$ to $\prod \Z_{n_i}$ determined by the cohomology classes $[\beta_i]\in H^1(F_d,\Z_{n_i})$.
\end{proof}
\begin{rem}
\label{rem:cover good for alpha}
There is an extra feature to the construction in the proof of Lemma~\ref{lem:omnipotent}. If $\alpha\subset F$ is an arc joining two distinct boundary components of $F$, then no two lifts of $\alpha$ to $F^*$ join the same pair of boundary components. See Figure~\ref{fig:alphalifts}.
\end{rem}

\begin{figure}[t]
\begin{center}
\includegraphics[width=12cm]{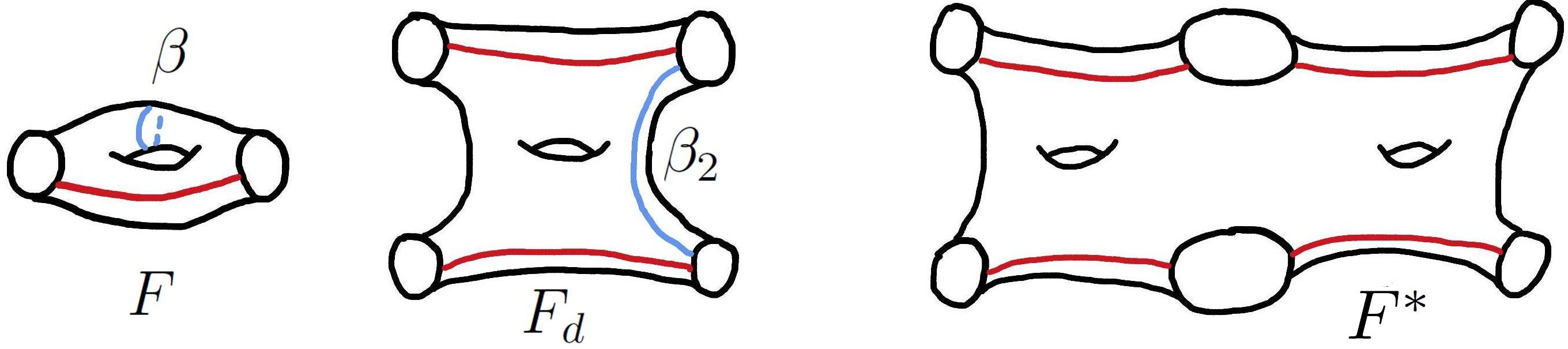}
\caption{$\beta,\beta_2$ and the lifts of $\alpha$ to $F_d$ and $F^*$ with $d_1=1, d_2=2$
\label{fig:alphalifts}}
\end{center}
\end{figure}

\begin{proof}[Proof of Proposition~\ref{prop:Hempel_simult}]
First we find $F^*$ satisfying property (*) but possibly having the wrong degree on the boundary components. We consider $H=\pi_1(C_1\cup\alpha\cup C_2)\subset\pi_1F$. Since $H$ is a finitely generated subgroup of the free group $\pi_1F$, it is separable. We can assume that the endpoints of all the arcs in $\mathcal{A}$ actually coincide with the endpoints of $\alpha$. Let $A\subset \pi_1F$ be the finite set of elements determined by $\alpha\cup\alpha_i^{-1}$, for those arcs $\alpha_i\in \mathcal{A}$ which are not homotopic to $\alpha$ relative to the boundary circles. The set $A$ is disjoint from $H$, so there is a finite index subgroup $G$ of $\pi_1F$ containing $H$ but disjoint from the finite set $A$. The cover $F^*$ corresponding to $G$ satisfies~(*).

Let $n$ be so large that $n!$ is divisible by all the degrees on the boundary components of the cover $F^*\rightarrow F$. Applying Lemma~\ref{lem:omnipotent} we pass from $F^*$ to a further cover having degrees $n!$ on the boundary. Note that if $\mathcal{A}$ does not contain an arc homotopic to $\alpha$, then (*) persists under passage to a finite cover, and we are done. Otherwise, if $\alpha$ lies in $\mathcal{A}$, we must additionally invoke Remark~\ref{rem:cover good for alpha}.
\end{proof}

\subsection*{Surface-injective covers}

In the proof of Theorems~\ref{thm:separability} and~\ref{thm:double_separability} we will need ``surface-injective'' covers.
As preparation, we discuss the structure of the following infinite cover. As usual, we assume that $S\subset M$ is an incompressible oriented surface embedded in a graph manifold $M$, and that each nonempty intersection of $S$ with a block of $M$ is either vertical or horizontal.

\begin{defin}
\label{def:MS}
Let $M^S$ denote the infinite cover $\pi_1S\backslash\widetilde{M}$ of $S$ corresponding to $\pi_1S\subset \pi_1M$. Let us describe the topology of non-empty
blocks of $M^S$. For each horizontal component $S_0$ of $S\cap M_v$, there is in $M^S$ an associated horizontal block $M_v^{S_0}\cong S_0\times \R$. Similarly, for
each vertical component $S_0$ of $S\cap M_v$, there is in $M^S$ an associated vertical block $M_v^{S_0}\cong S^1\times \widetilde{F}_v$. The annulus $S_0$ embeds inside $M_v^{S_0}$ as a product of the factor $S^1$ and a proper arc on $\widetilde{F}_v$.
\end{defin}

\begin{defin}
\label{def:separating cover}
Let $S$ be a surface in $M$. A finite cover $M'$ of $M$ is called \emph{$S$--injective}
with respect to the JSJ tori, if $S$ lifts to $M'$ and $S\cap B'$ is connected for each block $B'$ of $M'$. Moreover, we require that each horizontal component of
$S\cap B'$ maps with degree $1$ onto the base surface of $B'$. We allow $M'=M$.
\end{defin}

In particular, $M'$ arises from $M^{S}$ by quotienting each non-empty block
separately (though empty blocks are identified). Observe that the intersection of the lift
of $S$ with each JSJ or boundary torus of $M'$ is connected.
%Note that a priori it is not immediate
%that there exist $S$--injective covers of $M$. Moreover, t
The property of being $S$--injective
is not preserved under passage to covers. Nevertheless, in Construction~\ref{constr:separating} we provide (high-degree) $S$--injective covers. We need the following
terminology and lemma:

\begin{defin}
\label{def:partial}
A \emph{semicover} of a graph manifold $M$ with respect to the JSJ tori is a graph manifold $\overline{M}$ together with a local embedding $\overline{M}\rightarrow M$ restricting to a covering map over each JSJ torus and over each open block. We say that the semicover is \emph{finite} if $\overline{M}$ is compact. Then $\overline{M}\rightarrow M$ can only fail to be a covering map at a torus $\overline{T}$ of $\partial\overline{M}$ that covers a JSJ torus of $M$. We refer to such a $\overline{T}$ as a \emph{halt torus}.
\end{defin}

\begin{lem}
\label{lem:completions}
Let $p\colon\overline{M}\rightarrow M$ be a finite semicover. Suppose that all halt tori of $\overline{M}$ map homeomorphically onto JSJ tori of $M$. Then we can embed $\overline{M}$ in a graph manifold $M'$ such that the semicover $p$ extends to a finite cover $M'\rightarrow M$.
\end{lem}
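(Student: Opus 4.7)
The plan is a ``pad-and-pair'' construction. For each block $B$ of $M$, let $d_B$ denote the sum of the covering degrees of the preimages of $B$ in $\overline{M}$, and choose a positive integer $D$ divisible by every $d_B$. Form the disjoint union
\[
X \;:=\; \overline{M} \;\sqcup\; \bigsqcup_{B \text{ block of } M} (D-d_B)\text{ copies of } B,
\]
where each added copy is a stand-alone Seifert piece whose boundary tori are all treated as halt tori mapping homeomorphically onto the corresponding JSJ or $\partial M$ tori. The natural projection $X \to M$ is a local embedding that restricts to a covering over each open block and over each JSJ torus, and by construction the total preimage degree over every block of $M$ equals $D$. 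Note that all halt tori of $X$ continue to map homeomorphically onto JSJ tori of $M$.

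Next, for each JSJ torus $T$ of $M$, choose any bijection between the halt tori of $X$ over $T$ lying on its two sides, and glue each matched pair by identifying them via the composition of their homeomorphisms with $T$. Each such identification produces a torus covering $T$ with the two adjacent blocks of $X$ covering the two blocks of $M$ adjacent to $T$; since every gluing factors through $T \subset M$, the Seifert structures match automatically on the two sides. After performing these identifications for every $T$, one obtains a closed $3$-manifold $\widetilde{X}$ with no halt tori and a covering map $\widetilde{X} \to M$ of degree $D$. Take $M'$ to be the connected component of $\widetilde{X}$ containing $\overline{M}$: since a union of components of a cover is a cover, $M' \to M$ is a finite covering extending the semicover $p$, and $\overline{M} \hookrightarrow M'$ by construction.

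The main obstacle I anticipate is verifying that such bijections exist for every $T$. This should follow from a direct degree count: on each side of $T$, every block of $X$ covering the adjacent block of $M$ with degree $d'$ contributes boundary tori of total degree $d'$ over the corresponding boundary torus, so summing over blocks the total degree of tori of $X$ over $T$ on each side equals $D$. Subtracting the contributions of the non-halt tori of $\overline{M}$ (which are counted symmetrically on both sides) leaves equal totals of degree-$1$ halt tori on the two sides of $T$, so the required pairing exists. A minor additional check is that the matching does not introduce new pieces in $M'$ beyond those needed, but this is automatic from passing to the connected component containing $\overline{M}$.
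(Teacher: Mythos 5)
Your argument is essentially the paper's own proof: pad the semicover with extra copies of each block so that the total degree over every block is a constant $D$, then pair up the degree-one halt tori over each JSJ torus, the existence of the pairing following from the degree count you give (the paper takes $D=\max_{v,w} d_{v,w}$ rather than a common multiple, but either choice works). The only slip is calling $\widetilde{X}$ a \emph{closed} $3$--manifold --- when $\partial M\neq\emptyset$ the tori over $\partial M$ remain as genuine boundary --- but this does not affect the construction.
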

\begin{proof}
For each $M_v$ let $d_v$ be the degree of the (possibly disconnected) cover $p^{-1}(M_v)\rightarrow M_v$. Similarly, let $d_{v,w}$ be the degree of $p^{-1}(T)\rightarrow T$ for the torus $T=M_v\cap M_w$. Let $D=\max_{v,w} \{d_{v,w}\}$. For each $v$, take $D-d_v$ copies of $M_v$ and glue these copies to $\overline{M}$ to form $M'$.
\end{proof}

\begin{constr}
\label{constr:separating}
Let $S$ be a straight incompressible surface in $M$ (see Corollary~\ref{cor:straight})
and let $N>0$ be divisible by all the degrees of (possibly disconnected)
covering maps $S\cap M_v\rightarrow F_v$. Then there is a finite cover $M^S_N$ of $M$
which is
\begin{itemize}
\item
$S$--injective with respect to the JSJ tori and
\item
such that each JSJ or boundary torus of $M^S_N$ intersected by the lift of $S$ maps to a torus
$T$ of $M$ with degree $\frac{N}{|S\cap T|}$.
\end{itemize}
\end{constr}

Here $|S\cap T|$ denotes the number of connected components of $S\cap T$. The construction also
works if $S$ is disconnected and we will need this in \cite{PWsep}.

\begin{proof}
Consider a horizontal block $M_v$. Let $n=|S\cap M_v|$. Let $S_0$ be a component
of $S\cap M_v$ (they are all parallel). We take the unique degree $\frac{N}{n}$ cover of
$M_v$ to which $S_0$ lifts. It is the quotient of the $M^{S_0}_v$ block of $M^S$
(see Definition~\ref{def:MS}) by the $\frac{N}{n}$th power of the generator of covering
transformations. By the divisibility hypothesis, the result of this over the boundary of $M_v$ is that: if there are $k$ components of intersection of $S$ with a boundary torus, then in the cover we get $k$ tori components projecting with degree $\frac{N}{k}$. Hence for two adjacent horizontal blocks we have a matching between the elevations of the JSJ tori crossed by (the lifts of) $S_0$'s.

Now consider a vertical block $M_v$. Fix a component $S_0$ of $S\cap M_v$. Let $F'_v$ be the
double cover of $F_v$ determined by the $\Z_2$ cohomology class of a non-separating simple closed curve on $F_v$. Each boundary component $C$ of $F_v$ is covered in $F'_v$ by a pair $C_1,C_2$. Suppose $S_0$ connects tori $T,Q$ with base boundary circles of $F_v$ denoted by $C^T,C^Q$. Let $t=|S\cap T|$ and $q=|S\cap Q|$. Pick disjoint embedded arcs $\theta, \omega$ in $F_v'$ joining $C^T_1$ with $C^T_2$ and $C^Q_1$ with $C^Q_2$. Take $\frac{N}{t}-1$ extra copies of $F_v'$ containing copies of $\theta$ and $\frac{N}{q}-1$ extra copies of $F_v'$ containing copies of $\omega$. Cutting and regluing along these arcs in cyclic order gives a cover of $F_v$ whose boundary components project homeomorphically, except two degree $\frac{N}{t}$ covers of $C^T$ and two degree $\frac{N}{q}$ covers of $C^Q$. To get a cover of $M_v$ we form the product with $S^1$.

We now take one such covering block for each component $S_0$ of $S\cap M_v$ for vertical $M_v$ and two blocks described above for horizontal $M_v$. All boundary components match except that there are some hanging boundary components giving rise to halt tori.

This concludes the construction of a semicover. Note that the lift of $S$ crosses all blocks
of this semicover that cover vertical blocks of $M$ and half of the blocks covering a horizontal
one. This semicover satisfies the hypothesis of Lemma~\ref{lem:completions}. We use it to
obtain a (non-unique) $S$--injective cover $M_N^S$.
\end{proof}

\section{Separability of a surface}
\label{sec:separability}
\begin{proof}[Proof of Theorem~\ref{thm:separability}]
We can assume that $M$ is simple (see Section~\ref{sec:not}) and $S$ is straight (see Corollary~\ref{cor:straight}).

If $S$ is a vertical torus or annulus contained in a single block, then $\pi_1S$ is
separable by
Corollaries~\ref{cor:vertical separable} and~\ref{cor:vertical_annuli}. Otherwise, $S$ contains a horizontal piece. Choose the basepoint $\tilde{m}$ of the universal cover $\widetilde{M}$ of $M$ in the interior of a horizontal piece of the elevation $\widetilde{S}$ of $S$ to $\widetilde{M}$ stabilized by $\pi_1S\subset \pi_1M$. Let $g\in \pi_1M-\pi_1S$ and let $\widetilde{\gamma}$ be a path in $\widetilde{M}$ representing $g$, i.e.\ joining $\tilde{m}$ to $g\tilde{m}$. Then $\widetilde{\gamma}$ does not end on $\widetilde{S}$. Our goal is to find a finite cover with the same property.

We can assume that $\widetilde{\gamma}$ crosses as few elevations of JSJ tori as possible (in other words, $\widetilde{\gamma}$ does not ``backtrack'' in $\widetilde{M}$.) Let $\widetilde{B}$ denote the last non-empty block of $\widetilde{M}$ entered by $\widetilde{\gamma}$ (when all blocks crossed by $\widetilde{\gamma}$ are non-empty, we take $\widetilde{B}$ to be the last one).

We first consider the case where $\widetilde{B}$ is the last block of $\widetilde{M}$ entered by $\widetilde{\gamma}$.
Then $\widetilde{B}$ is horizontal (by the choice of $\tilde{m}$).
In the quotient $B^S\subset M^S$ of $\widetilde{B}$ the projection of the endpoint of $\widetilde{\gamma}$ is still disjoint from $S$ and the same is true in a sufficiently large cyclic quotient of the block $B^S$. This quotient coincides with an appropriate block of the cover $M_N^S$ from Construction~\ref{constr:separating}. Hence for $N$ sufficiently large the cover $M_N^S$ is as desired.

We now consider the case where $\widetilde{B}$ is not the last block entered by $\widetilde{\gamma}$, in which case $\widetilde{B}$ is vertical. By Corollary~\ref{cor:gamma does not backtrack} we can pass to a finite cover $M'$ where the projection $\gamma'$ of $\widetilde{\gamma}$ does not backtrack, i.e.\ $\gamma'$ does not cross the same JSJ torus twice. This property will be preserved under taking further covers.

Let $S'$ denote the elevation of $S$ to $M'$.
For separability of $\pi_1S$ we shall prove that $\gamma'$ does not end in $S'$ within $M'$ or after passing to a further finite cover.

Let $\widetilde{T}$ denote the universal cover of a JSJ torus through which $\widetilde{\gamma}$ leaves $\widetilde{B}$. Let $T'$ be its quotient in $M'$.
Our first step is to guarantee that in $M'$ (or its finite cover), the surface $S'$ does not cross $T'$.
% (like in $\widetilde{M}$).

\begin{figure}[ht]
\begin{center}
\includegraphics[width=9cm]{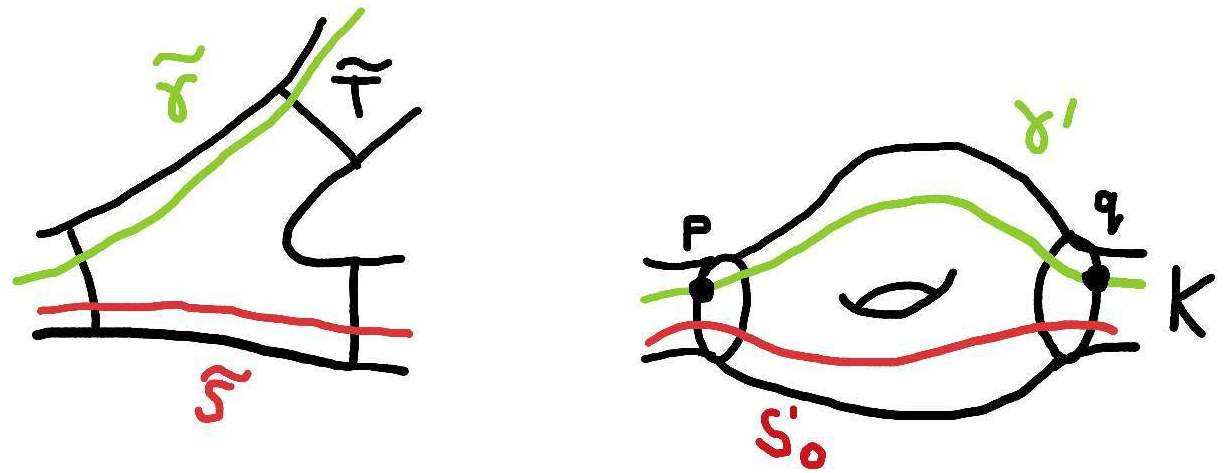}
\end{center}
\caption{$q$ requires removing from $K$}
\label{fig:pq}
\end{figure}

The quotient block $B'\subset M'$ of $\widetilde{B}$ is vertical.
Let $p$ and $q$ denote the projections to $B'$ of the first and last point of the intersection
of $\widetilde{\gamma}$ with $\widetilde{B}$.
Let $S'_0$ be the quotient in $B'$ of $\widetilde{S}\cap \widetilde{B}$ (there
might be some other components of $S'\cap B'$). Let $K$ be the JSJ torus crossed by $S'_0$
other than the one containing $p$. Any further $S'$--injective cover (for example $M'^{S'}_N$
from Construction~\ref{constr:separating}) satisfies our condition $S'\cap T'=\emptyset$
unless $q\in K$ (i.e.\ $T'=K$), see Figure~\ref{fig:pq}. In that case we first use
Corollary~\ref{cor:vertical separable} to pass to a cover where (keeping the same notation) the point $q$ does not lie in $K$, and so $S'_0$ does not cross $T'$. There might still be an accidental component of $S'\cap B'$ intersecting $T'$. We can remove it by passing to an $S'$--injective cover.

Summarizing, we have constructed a cover $M'$ where $T'$ is disjoint from $S'$. It suffices now to pass to a degree $2$ cover determined by the cohomology class $[T']\in H^1(M',\Z_2)$. In that cover the portion of $\gamma'$ after $q$ is contained entirely in the union of empty blocks. In particular, its end lies outside the appropriate lift of $S'$, as desired.
\end{proof}

\section{Separability of intersecting surfaces}
\label{sec:double}

\textbf{Outline of the argument.}
Let $\widetilde{S}$ and $\widetilde{P}^0$ be intersecting elevations of $S$ and $P$ to the universal cover $\widetilde{M}$ of $M$. We reserve the notation $\widetilde{P}$ for a different elevation of $P$. By our hypothesis, $\widetilde{S}\cap\widetilde{P}^0$ is non-empty, and so we can choose the basepoint $\tilde{m}\in \widetilde{S}\cap\widetilde{P}^0$.

We fix $g\in \pi_1M-\mathrm{Stab} (\widetilde S) \mathrm{Stab} (\widetilde P^0)$ and take a path $\widetilde{\gamma}$ starting at $\tilde{m}$ representing $g$ in $\widetilde{M}$. Let $\widetilde{P}$ denote the elevation of $P$ through the terminal point $g\tilde{m}$ of $\widetilde{\gamma}$. We aim to find a finite quotient of $\widetilde{M}$, where the projections of $\widetilde{S}$ and $\widetilde{P}$ are ``disjoint'' in the sense that they do not intersect at a basepoint-translate.

The main object we work with is the ``core'' $\overline{M}\subset\widetilde{M}$ consisting of blocks simultaneously intersecting $\widetilde{S}$ and $\widetilde{P}$. In Step~1 we prove that for each $\pi_1S$ orbit in $\overline{M}$ of a
core block, we can quotient it to a finite block with ``disjoint'' quotients of $\widetilde{S}$ and $\widetilde{P}$.

In Step~2 we use Step~1 to show how to simultaneously quotient the whole core (or rather its extension) to a finite quotient $\widehat{M}'$ where the images of $\widetilde{S}$ and $\widetilde{P}$ are ``disjoint'' (Step~2(i)). Moreover, we arrange that the images $\widetilde{S}$ and $\widetilde{P}$ never simultaneously intersect the same halt torus of the semicover (see Definition~\ref{def:partial}) $\widehat{M}'\rightarrow M$ (Step~2(iii)). The semicover $\widehat{M}'$ extends to a finite cover $M'$ by Step~2(ii).

Finally, in Step~3, we use Step~2(iii) to pass to a further cover, where the quotients of $\widetilde{S}$ and $\widetilde{P}$ can meet only inside the image of the core. But this is excluded by Step 2(i).

\smallskip

\begin{figure}[ht]
\begin{center}
\includegraphics[width=5cm]{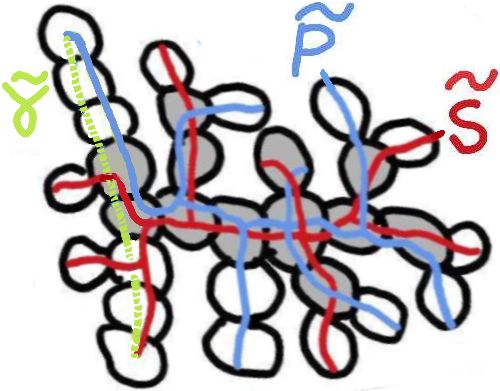}
\end{center}
\caption{core of $\widetilde{M}$
\label{fig:core}}
\end{figure}

\begin{proof}[Proof of Theorem~\ref{thm:double_separability}]
We choose $\tilde{m}\in \widetilde{S}\cap\widetilde{P}^0$ as in the outline of the argument.
Without loss of generality, we can assume that if $\tilde{m}$ lies in a
vertical piece of $\widetilde{P}^0$ then it also lies in a vertical piece of $\widetilde{S}$. We identify $\mathrm{Stab}(\widetilde{S})\subset\pi_1M$ with $\pi_1S$. Note that by Corollary~\ref{cor:straight}, by passing to a finite cover we can assume that $S$ and $P$ are straight. As usual, $M$ can be also assumed to be simple and $S$--injective.

Let $g\in \mathrm{Stab} (\widetilde S) \mathrm{Stab} (\widetilde P^0)$ and let $\widetilde{\gamma}$ be a path representing $g$ in $\widetilde{M}$ as in the outline. We can assume that $\widetilde{\gamma}$ traverses as few blocks of $\widetilde{M}$ as possible.

Recall that $\widetilde{S}$ is the elevation of $S$ to $\widetilde{M}$ passing through the initial point $\tilde{m}$ of $\widetilde{\gamma}$, and $\widetilde{P}$ is the elevation of $P$ to $\widetilde{M}$ passing through the terminal point $g\tilde{m}$ of $\widetilde{\gamma}$. Our hypothesis on $g$ says that $\widetilde{S}$ and $\widetilde{P}$ do not cross at any translate
of the basepoint $\widetilde{m}$ (we will refer to such a point or its quotient in an intermediate cover as a \emph{basepoint-translate}). Our separability goal is to find a finite cover of $M$ with the same property.

The \emph{core} $\overline{M}$ of $\widetilde{M}$ is the union of blocks intersecting both $\widetilde{S}$ and $\widetilde{P}$ (see Figure~\ref{fig:core}, but note that the core might consist of an infinite number of blocks). Assume that the core is non-empty, we will consider the other case at the very end of the proof.

Let $\overline{S}=\widetilde{S}\cap \overline{M}$ and $\overline{P}=\widetilde{P}\cap \overline{M}$. Let $\overline{M}^S$ be the manifold obtained from the core by identifying points in the same orbit of $\pi_1S$ (this is not a genuine action on $\overline{M}$, only a partial one). In this identification we
%exceptionally
treat the core as an open manifold, so we do not identify boundary components whose adjacent core blocks are not identified.

\begin{figure}[ht]
\includegraphics[width=8cm]{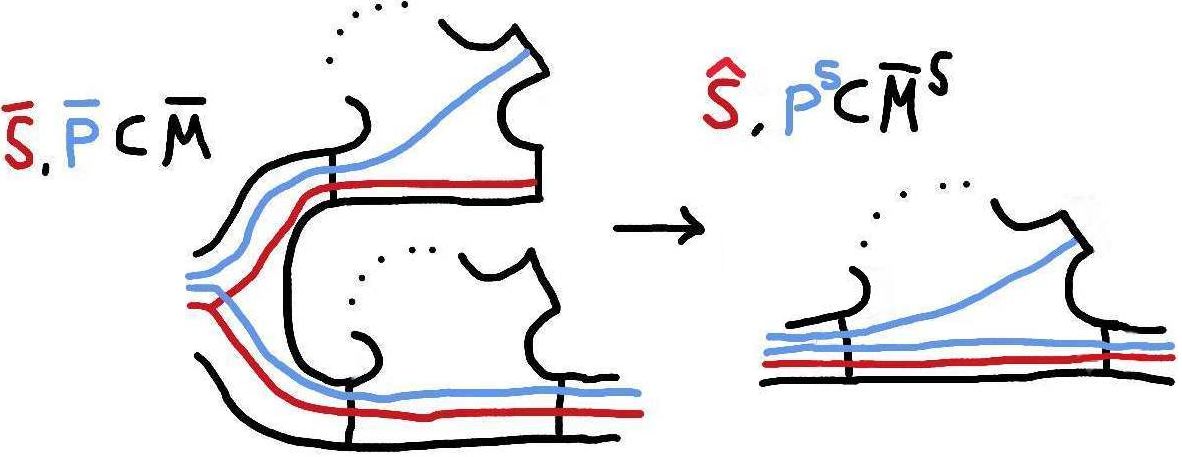}
\caption{$\overline{P}\rightarrow P^S$ is proper}
\label{fig:proper}
\end{figure}

Let $P^S$ be the quotient of $\overline{P}$ in $\overline{M}^S$ and let $\widehat{S}$ be the quotient of $\overline{S}$ in $\overline{M}^S$. Note that $P^S$ and $\widehat{S}$ do not go through the same basepoint-translate. The notation $\widehat{S}$ instead of $S^S$ is justified by the fact that $\widehat{S}$ is in fact a lift of a ``core'' subsurface of $S$.
Note that though the map $\overline{M}\rightarrow \overline{M}^S$ is not \emph{proper} in the sense that a boundary component of $\overline{M}$ might be mapped into the interior $\overline{M}^S$, its restriction to $\overline{P}\rightarrow P^S$ is proper (see Figure~\ref{fig:proper}). Equivalently, boundary components of $\overline{P}$ are mapped onto boundary components of $P^S$.

\begin{step} Let $B^S$ be a block in $\overline{M}^S$ covering a block $B$ of $M$. Then $B^S$ factors through a finite cover $B^*$ of $B$ where quotients of $P^S$ and $\widehat{S}$ still do not intersect at a basepoint-translate.
\end{step}

Loosely speaking, in Step~1 we shall achieve separability at a single core block.
Note that the property of the finite cover $B^*$ in Step~1 is preserved by passing to a further cover which is a quotient of $B^S$.

\smallskip
First assume that $\tilde{m}$ lies in a horizontal piece of $\widetilde{P}^0$. Consequently, if a basepoint-translate lies in $B^S$, then the block $B^S$ is $P^S$--horizontal. Let $S_0=\widehat{S}\cap B^S$. First consider the case where $S_0$ is vertical. Then there are only finitely many elevations of $P\cap B\subset M$ to $B^S$: their number is bounded by the degree of $P\cap B\rightarrow F$, where $F$ is the base surface of $B$. The action of covering transformations of $B$ on the universal cover of the block $B^S$ factors trough an action on $B^S$. As there are only finitely many elevations of $P\cap B$ to $B^S$, a finite index subgroup of the group of covering transformations preserves all of them. We quotient by this subgroup to obtain a desired finite cover $B^*$ of $B$.

Now consider the case where $S_0$ is horizontal. Then the action of covering transformations on the universal cover of the block $B$ factors to $\Z=\langle c\rangle$ action on $B^S$. The easy subcase is where one (hence any) elevation of $P\cap B$ to $B^S$ is non-compact (see Figure~\ref{fig:spiral}). Then as in the previous case, there are only finitely many elevations of $P\cap B$ and we can choose a finite cover $B^*$ obtained by quotienting by a subgroup $\langle c^k\rangle$ that maps $P^S\cap B^S$ onto itself.

\begin{figure}[]
\begin{center}
\includegraphics[width=2.5cm]{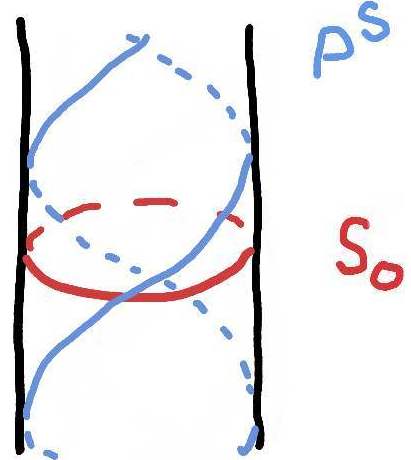}
\end{center}
\caption{noncompact components of $P^S\cap B^S$: cross-section by a cylinder}
\label{fig:spiral}
\end{figure}

The interesting subcase is where the elevations of $P\cap B$ to $B^S$ are compact. Let $P_0$ be any component of $P^S\cap B^S$. Let $\mathcal{P}$ be the maximal connected subsurface of $P^S$ containing $P_0$ consisting uniquely of horizontal pieces. First consider the situation where $P_0$ is properly contained in $\mathcal{P}\cap B^S$. We will show that $P^S\cap B^S$ is invariant under some $c^k$ (as in the case of non-compact $P_0$).

Note that the pieces of $\mathcal{P}\cap B^S$ might not lie in one $\langle c \rangle$--orbit. However, we can extend this action to another cyclic action $\langle c \rangle\subset \langle \underline{c} \rangle$ on $B^S$ by homeomorphisms for which all the pieces of $\mathcal{P}\cap B^S$ lie in one orbit.

Assume that for some $k\neq 0$ the translate $\underline{c}^kP_0$ lies in $\mathcal{P}$. Let $B_i$ be a sequence of blocks of $\overline{M}^S$ with
$P_i\subset B_i$ a sequence of pieces in $\mathcal{P}$ connecting $P_0$ to $\underline{c}^kP_0$. The action of $\underline{c}$ extends to all blocks $B_i$ crossed by $\mathcal{P}$ (some might be $\widehat{S}$--vertical). Hence for any $n$ there is a sequence of pieces that are translates of $P_i$ joining $\underline{c}^nP_0$ to $\underline{c}^{k+n}P_0$.
Thus $\underline{c}^nP_0$ lies in $P^S$ if and only if $\underline{c}^{k+n}P_0$ does, in other words $P^S\cap B$ is $\underline{c}^k$--invariant. Then for some $k'$ we have that $P^S\cap B$ is $c^{k'}$--invariant, as desired.

It remains to consider the situation where $P_0$ equals $\mathcal{P}\cap B^S$. Then, by the $c$--action argument above, the same is true for any choice of $P_0$ in $P^S\cap B^S$. Moreover, since there are only finitely many vertical pieces of $P^S$ in $\overline{M}^S$ with both boundary components in the interior of $\overline{M}^S$, only finitely many translate copies of $\mathcal{P}$ are joined together contributing to $P^S\cap B^S$. We conclude that $P^S\cap B^S$ is compact. Then for any sufficiently large $k$, no element of $\langle c^k \rangle$ maps a basepoint-translate in $S_0$ onto a point in $P^S$. This finishes the argument for Step~1 under the assumption that $\tilde{m}$ lies in a horizontal piece of $\widetilde{P}^0$.

\smallskip

Finally, assume that $\tilde{m}$ lies in the intersection of a vertical piece of $\widetilde{P}^0$ and a vertical piece of $\widetilde{S}$. Consequently, if a basepoint-translate lies in $B^S$, then $B^S$ is both $P^S$--vertical and $\widehat{S}$--vertical. Let $K_1,K_2$ be the boundary cylinders of $B^S$ crossed by $\widehat{S}$. By the definition of the core, except for the exceptional situation where the core is a single block and hence $P^S\cap B^S$ has just one component, each piece of $P^S\cap B^S$ intersects some $K_i$. By the $c$--action argument applied to adjacent ($P^S$--horizontal) blocks of $B^S$, for each $i=1,2$, the intersection $P^S\cap K_i$ is either compact or periodic. Then after quotienting $B^S$ by finite index subgroups of one, both or none of the stabilisers of $K_i$ we obtain $\check{B}$, in which the quotient of $P^S$ is compact and still does not intersect $\widehat{S}$ in a basepoint-translate.
Let $\check{F}\rightarrow F$ be the cover induced between the base surfaces of $\check{B}\rightarrow B$.
By separability of $\pi_1\check{F}$ in $\pi_1F$, the cover $\check{B}$ quotients further to a desired cover $B^*$. This completes the argument for Step~1.

\smallskip

Let $\widehat{M}$ denote the quotient of $\overline{M}$ (and hence $\overline{M}^S$) in $M$. However, if there is a JSJ torus $K$ in $M$ outside the image of the interior of $\overline{M}$ but with both of its adjacent blocks within the image of $\overline{M}$, then we put in $\widehat{M}$ two copies of $K$, each compactifying one of the adjacent blocks. In other words, $\widehat{M}$ is contained in $M$ only in the sense of manifolds open at the boundary. Since $M$ is $S$--injective, each block of $\widehat{M}$ is covered by exactly one block of $\overline{M}^S$.
We note, however, that $\overline{M}^S\rightarrow \widehat{M}$ is only an infinite semicover.

\begin{step}
There is a finite cover $\widehat{M}'$ of $\widehat{M}$ through which the map $\overline{M}\rightarrow \widehat{M}$ factors, with the following properties.
Let $\widehat{S}',\widehat{P}'\subset \widehat{M}'$ be the extensions of the quotients of $\overline{S}, \overline{P}$ in $\widehat{M}'$ to elevations of $S\cap \widehat{M}$ and a component of $P\cap \widehat{M}$.

\begin{enumerate}[(i)]
\item
Each block $B'$ of $\widehat{M}'$ is a further cover of a cover $B^*$ satisfying Step~1. Moreover, the images of $\widehat{S}'\cap B',\ \widehat{P}'\cap B'$ in $B^*$ are contained in the quotients of $P^S$ and $\widehat{S}$.
\item
Over all boundary tori the cover $\widehat{M}'\rightarrow \widehat{M}$ is
$n!$--characteristic (for some uniform $n$), i.e.\ it corresponds to
the subgroup $n!\Z\times n!\Z\subset \Z\times \Z$.
\item
Each halt torus of the semicover $\widehat{M}'\rightarrow M$ intersects at most one of $\widehat{S}',\widehat{P}'$.
\end{enumerate}
Moreover, $\widehat{M}'$ is $\widehat{S}'$--injective.
\end{step}

Note that in view of Step~1, Step~2(i) implies immediately that $\widehat{S}'$ and $\widehat{P}'$ do not intersect at a basepoint-translate.

\smallskip

The value $n$ is the maximum of $n$ needed to execute the following construction over each of the finitely many blocks of $\widehat{M}$.

\smallskip

First suppose that $B\subset \widehat{M}$ is $P$--horizontal and $S$--horizontal. Take the cyclic cover $B^*$ guaranteed by Step~1. It may be taken with any degree $n!$ for $n$ sufficiently large. To make the quotient to $B$
characteristic on the boundary tori, we pass from $B^*$ to a cover $B'$ induced by any cover of $S\cap B$ of degree $n!$ on each boundary component (use Lemma~\ref{lem:omnipotent}).

\smallskip

Now assume that $B\subset \widehat{M}$ is $P$--horizontal but $S$--vertical.
Again take the cover $B^*$ guaranteed by Step~1. By Lemma~\ref{lem:omnipotent} it may be chosen to be degree $n!$ on boundary tori for $n$ sufficiently large. In order to make the quotient to $B$
characteristic on the boundary tori, we pass from $B^*$ to a cyclic cover $B'$ of degree $n!$ determined by an arbitrary degree~$1$ horizontal surface.

\smallskip
In the case where the block $B\subset \widehat{M}$ is both $P$--vertical and $S$--vertical, finding convenient $B^*$ will involve several steps. First of all, there is a cover $B^*$ of $B$ satisfying Step~1. Since we still want to replace $B^*$ by a particular finite cover, in order to simplify the notation we will assume that $B$ already has the property from Step~1 that $S\cap P$ and $P\cap B$ do not intersect at a basepoint-translate. This property is invariant under taking covers, so any further cover $B^*$ that we will construct will satisfy Step~1.

First assume that we are in a (simpler) subcase where in $B^S$ (the block in $\overline{M}^S$ covering $B$) there is a vertical annular piece of $P^S$ homotopic to $\widehat{S}\cap B^S$. In that case we apply Proposition~\ref{prop:Hempel_simult}
with $F$ being the base surface of the block $B$. Let $\alpha\subset F$ be the base arc of $S\cap B$. Let $\mathcal{A}$ be the family of those base arcs of $P^S\cap B^S$ that intersect the same components of $\partial F$ as $\alpha$.

By Proposition~\ref{prop:Hempel_simult}, for sufficiently large $n$ we get a finite quotient $B^*$ of $B^S$ of degree $n!$ on all boundary components over $B$. The base surface of the cover $B^*$ satisfies also (*), which will be used later. As before, to get a
characteristic cover over the boundary tori, we pass to a cover $B'$ of $B^*$, determined by an arbitrary element of $H^1(B^*,\Z_{n!})$ dual to a degree one horizontal surface in $B^*$.

\smallskip

In the (harder) subcase where in $B^S$ there is no vertical annular piece of $P^S$ homotopic to $\widehat{S}\cap B^S$, in Proposition~\ref{prop:Hempel_simult} we want to use, instead of $B$, an intermediate finite quotient $B_{int}$ of $B^S$ also having the property that the projection of the vertical annular piece of $\widehat{S}$ is not homotopic to the projection of any piece of $P^S$. ($B$ might not have this property.)

Let $K_1,K_2$ be the boundary cylinders of $B^S$ crossed by $\widehat{S}$. By the argument of Step~1 applied to adjacent blocks of $B^S$, for each $i=1,2$ either $P^S\cap K_i$ is compact, or there is a subgroup $k_i\Z \subset \Z$ of the covering transformations of $K_i$ that preserves $P^S\cap K_i$. This proves that any finite quotient $B_{int}$ of $B^S$ such that $B_{int}\rightarrow B$ over the quotients of $K_i$ in $B$ is of degree sufficiently high and divisible by $k_1k_2$ is as required. Now we repeat the construction of $B^*$ using Proposition~\ref{prop:Hempel_simult} with $B_{int}$ in place of $B$, so the surface $F$ is the base surface of $B_{int}$. However, we require that the boundary degrees are $n!$ when we quotient $B^*$ to $B$, not to $B_{int}$. Again, we record property~(*) for later use. We obtain $B'$ from $B^*$ as before. This closes the discussion of the case, where $B$ is both $P$--vertical and $S$--vertical.

The diagram below illustrates the intermediate covers between the universal $\overline{B}\rightarrow B$.

\begin{diagram}
\overline{B} &\rTo&B'    &      &B_{int}\\
\dTo         &    &\dTo   &\ruTo &       &\rdTo\\
B^S          &\rTo&B^*&      &\rTo   & & B
\end{diagram}

The last case is where a block $B$ is $P$--vertical and $S$--horizontal. Here $n$ is arbitrary. We first take the degree $n!$ cyclic cover $B^*$ of $B$ determined by  $[S_0]\in H^1(B^*,\Z_{n!})$, where $S_0=S\cap B$. Next we pass to a cover $B'$ induced by any cover $S'_0\rightarrow S_0$ of degree $n!$ on each boundary component (use Lemma~\ref{lem:omnipotent}).

\smallskip

We take $n$ sufficiently large for both the construction in Step~1 and various applications of Proposition~\ref{prop:Hempel_simult} with the above data to work. Then all blocks $B$ have covers $B'$ that are
$n!$--characteristic on the boundary. Now we take the right number of copies of each $B'$ so that the degree of the disconnected cover from the union of the copies of $B'$ to $B$ does not depend on $B$.
In each of these copies we distinguish one elevation $\Sigma_c$ of (connected) $S\cap B$. In the case where $B$ is $S$--horizontal, the surface $\Sigma_c$ is of degree $1$ over the base surface of $B'$. Hence the intersection of $\Sigma_c$ with each boundary torus is at most a single curve. We match up these blocks, also matching the $\Sigma_c$'s,
to form a cover $\widehat{M}'$ of $\widehat{M}$. We pick any of the maps $\overline{M}\rightarrow \widehat{M}'$ mapping $\overline{S}$ to the union $\Sigma$ of the $\Sigma_c$'s. We will now verify that all the required properties of $\widehat{M}'$ hold.

\smallskip

Property (ii) is clear from the construction. Note that $\widehat{M}'$ is $\widehat{S}'$--injective, since $\widehat{S}'$ is contained in $\Sigma$.
Moreover, obviously for each copy $B_c$ in $\widehat{M}'$ of a block $B'$ that covers the quotient $B^*$ of $B^S$ in $\overline{M}^S$ we have the following. Under the identification of $B_c$ with $B'$, the projection to $B^*$ of the intersection $\widehat{S}'\cap B_c$ is contained in the projection to $B^*$ of $\widehat{S}\cap B^S$. We now claim the same for $\widehat{P}'$: the projection to $B^*$ of the intersection $\widehat{P}'\cap B_c$ is contained in the projection to $B^*$ of $P^S\cap B^S$.

Before we justify the claim, we note that although the map $\overline{M}\rightarrow \widehat{M}$ factors through $\widehat{M}'$, the image of $\overline{P}$ in $\widehat{M}'$ might be smaller than $\widehat{P}'$. It is a priori unclear where the extension $\widehat{P}'$ is located. We look for a surface $\Pi$ for $\widehat{P}'$ which replaces $\Sigma$ for $\widehat{S}'$ in the argument above.

Property (i) follows from the claim since $B^*$ was chosen as in Step~1.

\smallskip

To justify the claim, let $H$ be the set of elements $h\in\pi_1S$ preserving that copy of the universal cover of $\widehat{M}'$ in $\widetilde{M}$ which contains $\overline{M}$. In other words, $H=\pi_1\widehat{S}$. (Note that the cosets $\pi_1\widehat{S}'\backslash H$ correspond to different ways in which we could have defined the projection $\overline{M}\rightarrow\widehat{M}'$ above.)
Let $\Pi\subset \widehat{M}'$ be the union of the projections of $h\overline{P}$, over $h\in H$.
Obviously $\Pi$ has the property that for each $B_c\simeq B'$ as above the projection to $B^*$ of the intersection $\Pi\cap B_c$ is contained in the projection of $P^S\cap B^S$. Since the projection of $\overline{P}$ is contained in $\Pi$, in order to justify the claim, it remains to prove that $\Pi$ is a surface properly embedded in $\widehat{M}'$:

The quotient in $\widehat{M}'$ of a translate $h\overline{P}$ with $h\in H$ might fail to be proper only at a quotient of a boundary line $h\pi$ of $h\overline{P}$. If $\pi$ lies in $\overline{M}$ in the boundary of a $\overline{P}$--horizontal and $\overline{S}$--vertical block, then the quotient of $h\pi$ also lies in the boundary of $\widehat{M}'$ (since it is $\widehat{S}'$--injective). The only other possibility is that $\pi$ lies in the boundary of a $\overline{P}$--vertical and $\overline{S}$--vertical block $\overline{B}$:
Denote by $\psi$ the opposite boundary line in $\overline{B}$ of the piece of $\overline{P}$ containing $\pi$. Also, denote by $\sigma$ the boundary line of $\overline{S}\cap \overline{B}$ in the plane not containing $\psi$. The only boundary component of the quotient of $h\overline{B}$ in $\widehat{M}'$ which is possibly in the interior of $\widehat{M}'$, besides the one containing the quotient of $h\psi$, is the one containing the quotient of $h\sigma$. Assume then that $h\pi$ and $h\sigma$ hence also $\pi$ and $\sigma$ are mapped into the same JSJ torus of $\widehat{M}'$. Property (*) of Proposition~\ref{prop:Hempel_simult} then says that we were in the ``simpler subcase'' of the discussion above: in $B^S$ there is a vertical piece $p$ of $P^S$ homotopic to $\widehat{S}\cap B^S$. Moreover, the vertical annulus $p$ contains $\pi$ upon passing to the quotient $B^*$ guaranteed in Proposition~\ref{prop:Hempel_simult}. See Figure~\ref{fig:piecep}.
Let $f\overline{B}$ be the block of $\overline{M}$ from which $p$ arises, with $f\in H$. In $f\overline{B}$ the pieces of $\overline{S}, \overline{P}$ are also homotopic. By the definition of the core, the line $f\pi$ is in the interior of $\overline{P}$. Hence $h\pi$ is in the interior of $hf^{-1}\overline{P}\subset \Pi$. This finishes the argument for the claim and hence for property~(i).

\begin{figure}[t]
\begin{center}
\includegraphics[width=12cm]{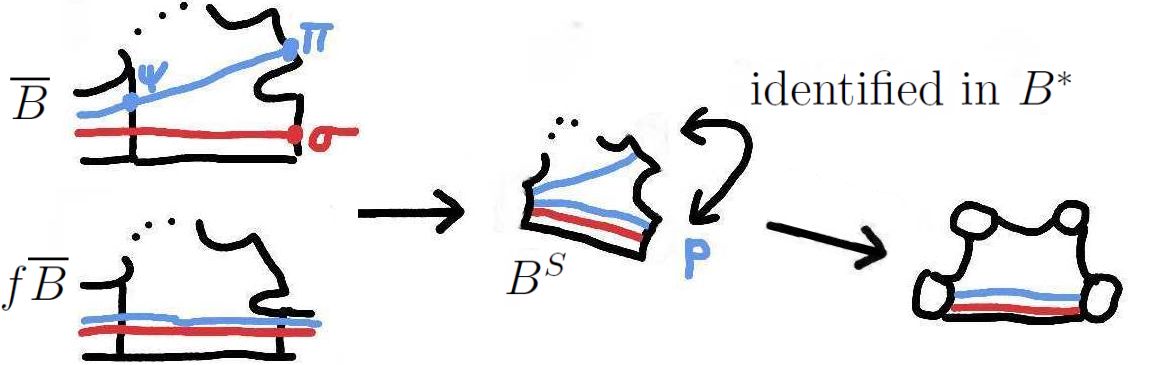}
\end{center}
\caption{finding $p$}
\label{fig:piecep}
\end{figure}

\smallskip
As for property (iii), we also need to use conclusion~(*) of Proposition~\ref{prop:Hempel_simult}. Let $K'$ be a halt torus of $\widehat{M}'$ in a copy of a block $B'$. Let $B^S\subset\overline{M}^S$ be the block mapped to the same $B^*$ as $B'$ and let $K^S$ be that elevation from $B^*$ to $B^S$ of the quotient of $K'$, which crosses $\widehat{S}$. Then $K^S$ lies also in the boundary of $\overline{M}^S$. Hence $P^S$ is disjoint from $K^S$ by the definition of the core. In view of (*) in the ``harder subcase'', the quotient of $\overline{P}$ in $\widehat{P}'$ is disjoint from $K'$. The same is true for $h\overline{P}$ over $h\in H$ ($H$ as in the proof of the claim above), hence for the whole $\widehat{P}'$. Thus we have proved property~(iii), that $\widehat{S}'$ and $\widehat{P}'$ do not cross $K'$ simultaneously. This completes the argument for Step~2.

\smallskip

The graph manifold $\widehat{M}'$ is a semicover of $M$. By Step~2(ii) we can complete it to a cover $M'$ by taking an appropriate number of disjoint copies of any finite covers of blocks in $M$ that are $n!$--characteristic
on the boundary. We require that $\widehat{M}'$ embeds in $M'$ as a closed submanifold --- we do not allow accidental matching of boundary components of open $\widehat{M}'$. By choosing those covers correctly we keep $M'$ to be $S'$--injective, where $S'\subset M'$ is the appropriate elevation of $S$. It remains to perform the following:

\begin{step}
There is a finite cover $M''$ of $M'$,
whose blocks $B''$ intersecting simultaneously the quotients $S'',P''$ of $\widetilde{S}, \widetilde{P}$ project to $B'\subset \widehat{M}'$ so that
$S''\cap B''$ maps into $\widehat{S}'$ and $P''\cap B''$ maps into $\widehat{P}'$.
\end{step}

Let $P'\subset M'$ be the quotient of $\widetilde{P}$. Let $\check{M}$ be a $P'$--injective cover of $M'$. We keep the notation
$P'$ for the lift of $P'$ to $\check{M}$ (quotient of $\widetilde{P}$) and
denote by $\check{S}$ the appropriate elevation of $S$ (quotient of
$\widetilde{S}$).
Let $\tau$ be the union of the JSJ tori of $\check{M}$ containing the
boundary components of $\widehat{P}'$ which are not in the boundary of $P'$.

We consider the degree 2 cover $M''$ of $\check{M}$ defined by the $\Z_2$
cohomology class $[\tau]$.
The union of tori $\tau$ is disjoint from $\check{S}$, by Step~2(iii). On
the other hand, by $P'$--injectivity, $\tau$ separates $P'$ into $\widehat{P}'$ and its complement.
Hence both $\check{S}$ and $P'$ lift to $M''$ and any piece of the lifted
$P'\setminus\widehat{P}'$ is in an $\check{S}$--empty block of $M''$.
This implies that $M''$ satisfies Step~3.

\smallskip

\textbf{Conclusion.} By Step~3, if surfaces $S''$ and $P''$ intersect in a block $B''$ of $M''$, then they project to surfaces $\widehat{S}'$ and $\widehat{P}'$ in a block $B'$ of $\widehat{M}'$. By Step~2(i), the surfaces $\widehat{S}'$ and $\widehat{P}'$ do not intersect in a basepoint-translate. Then the same is true for $S''$ and $P''$.

This concludes the proof of the main theorem except for the case where the core
is empty, which we shall now discuss. First, applying Corollary~\ref{cor:gamma does not backtrack} we pass to a
cover $M$, where the path $\gamma$ representing $g$ does not go through the same block twice.
By possibly passing to a further cover we also assume that $M$ is $S$--injective. Then instead
of the core we consider the minimal connected graph submanifold of $\widetilde{M}$ crossed by
both $\widetilde{S}$ and $\widetilde{P}$. Its blocks are in correspondence with some of the
blocks of $M$ crossed by $\gamma$. Steps~1 and~2 are now immediate. The surface $\widehat{S}'$ is
contained in a single block of the semicover $\widehat{M}'$. We extend $\widehat{M}'$ to a cover
$M'$ that is $S'$--injective and such that $S'\cap \widehat{M}'=\widehat{S}'$. We finally perform Step~3 as
in the main argument.
\end{proof}

\begin{rem}
\label{rem:annulus}
Theorems~\ref{thm:separability}~and~\ref{thm:double_separability} also hold
when $S$ and $P$ are allowed to be $\partial$--parallel annuli.
Theorem~\ref{thm:separability} follows directly from Corollary~\ref{cor:vertical_annuli}.

For Theorem~\ref{thm:double_separability}, if $P$ or $S$ are $\partial$--parallel
annuli, we homotope them into the boundary before we determine if their elevations $\widetilde{S},\widetilde{P}^0$ intersect. Without loss of generality we can assume that
$S$ is a $\partial$--parallel annulus, parallel to a boundary torus $T$. We identify $\mathrm{Stab}(\widetilde{S}), \mathrm{Stab}(\widetilde{P}^0)$ with $\pi_1S,\pi_1P$ for an appropriate basepoint.

If $P$ is also a $\partial$--parallel annulus, then it is also parallel to $T$, and it
suffices to use Corollary~\ref{cor:vertical separable} for separability of the finite index subgroup $\pi_1S\ \pi_1P$ of $\pi_1T$ in $\pi_1M$.

If $P$ is not a $\partial$--parallel annulus,
we can assume that $\pi_1S$ is not contained in
$\pi_1P\cap \pi_1T$. Let $H\subset \pi_1T$
be the finite index subgroup generated by $\pi_1S$ and $\pi_1P\cap\pi_1T$.
Then $H\pi_1P=\pi_1S\ \pi_1P$.
By separability of $H$ in $\pi_1M$ (Corollary~\ref{cor:vertical separable}), there is a finite cover $M'$ of $M$ with boundary torus $T'$
with fundamental group $H$. By Theorem~\ref{thm:double_separability} applied to $T'$ and
appropriate elevation $P'$ of $P$ to $M'$, we have that $H\pi_1P'$ is separable in $\pi_1M'$.
Hence $H\pi_1P$ is separable in $\pi_1M$ as desired.
\end{rem}

\vspace{-2.5mm}

\begin{bibdiv}
\begin{biblist}
\scriptsize

\bib{A}{article}{
   author={Agol, Ian},
   title={Criteria for virtual fibering},
   journal={J. Topol.},
   volume={1},
   date={2008},
   number={2},
   pages={269--284}}

\bib{Ahak}{article}{
   author={Agol, Ian},
   title={The virtual Haken conjecture}
   contribution={ type= {an Appendix}
                  author={Agol, Ian}
                  author={Groves, Daniel}
                  author={Manning, Jason}}
   status={preprint},
   date={2012},
   eprint={arXiv:1204.2810}}

\bib{BS}{article}{
   author={Buyalo, S. V.},
   author={Svetlov, P. V.},
   title={Topological and geometric properties of graph manifolds},
   language={Russian, with Russian summary},
   journal={Algebra i Analiz},
   volume={16},
   date={2004},
   number={2},
   pages={3--68},
   issn={0234-0852},
   translation={
      journal={St. Petersburg Math. J.},
      volume={16},
      date={2005},
      number={2},
      pages={297--340}}}

\bib{CN}{article}{
   author={Chatterji, Indira},
   author={Niblo, Graham},
   title={From wall spaces to $\rm CAT(0)$ cube complexes},
   journal={Internat. J. Algebra Comput.},
   volume={15},
   date={2005},
   number={5-6},
   pages={875--885}}

\bib{DK}{article}{
   author={Duchamp, G.},
   author={Krob, D.},
   title={The lower central series of the free partially commutative group},
   journal={Semigroup Forum},
   volume={45},
   date={1992},
   number={3},
   pages={385--394}}

\bib{HW}{article}{
   author={Haglund, Fr{\'e}d{\'e}ric},
   author={Wise, Daniel T.},
   title={Special cube complexes},
   journal={Geom. Funct. Anal.},
   volume={17},
   date={2008},
   number={5},
   pages={1551--1620}}

\bib{HW2}{article}{
   author={Haglund, Fr{\'e}d{\'e}ric},
   author={Wise, Daniel T.},
   title={Coxeter groups are virtually special},
   journal={Adv. Math.},
   volume={224},
   date={2010},
   number={5},
   pages={1890--1903}}

\bib{He}{article}{
   author={Hempel, John},
   title={Residual finiteness for $3$-manifolds},
   conference={
      title={Combinatorial group theory and topology},
      address={Alta, Utah},
      date={1984},
   },
   book={
      series={Ann. of Math. Stud.},
      volume={111},
      publisher={Princeton Univ. Press},
      place={Princeton, NJ},
   },
   date={1987},
   pages={379--396}
   }

\bib{Hump}{article}{
   author={Humphries, Stephen P.},
   title={On representations of Artin groups and the Tits conjecture},
   journal={J. Algebra},
   volume={169},
   date={1994},
   number={3},
   pages={847--862}}

\bib{Leeb}{article}{
   author={Leeb, Bernhard},
   title={$3$-manifolds with(out) metrics of nonpositive curvature},
   journal={Invent. Math.},
   volume={122},
   date={1995},
   number={2},
   pages={277--289}}

\bib{Liu}{article}{
          author={Liu, Yi},
          date={2011},
          title={Virtual cubulation of nonpositively curved graph manifolds}
          eprint={arXiv:1110.1940}}

\bib{LW}{article}{
   author={Luecke, John},
   author={Wu, Ying-Qing},
   title={Relative Euler number and finite covers of graph manifolds},
   conference={
      title={Geometric topology},
      address={Athens, GA},
      date={1993},
   },
   book={
      series={AMS/IP Stud. Adv. Math.},
      volume={2},
      publisher={Amer. Math. Soc.},
      place={Providence, RI},
   },
   date={1997},
   pages={80--103}}

\bib{Nica}{article}{
   author={Nica, Bogdan},
   title={Cubulating spaces with walls},
   journal={Algebr. Geom. Topol.},
   volume={4},
   date={2004},
   pages={297--309 (electronic)}}

\bib{PWmixed}{article}{
   author={Przytycki, Piotr},
   author={Wise, Daniel T.},
   title={Mixed $3$--manifolds are virtually special},
   status={submitted},
   date={2012},
   eprint={arXiv:1205.6742}}

\bib{PWsep}{article}{
   author={Przytycki, Piotr},
   author={Wise, Daniel T.},
   title={Separability of embedded surfaces in $3$--manifolds},
   status={submitted},
   date={2012},
   eprint={arXiv:1210.7298}}

%\bib{RW}{article}{
%   author={Rubinstein, J. Hyam},
%   author={Wang, Shicheng},
%   title={$\pi_1$-injective surfaces in graph manifolds},
%   journal={Comment. Math. Helv.},
%   volume={73},
%   date={1998},
%   number={4},
%   pages={499--515},
%}

\bib{WY}{article}{
   author={Wang, Shicheng},
   author={Yu, Fengchun},
   title={Graph manifolds with non-empty boundary are covered by surface
   bundles},
   journal={Math. Proc. Cambridge Philos. Soc.},
   volume={122},
   date={1997},
   number={3},
   pages={447--455},
}

\bib{Hier}{article}{
   author={Wise, Daniel T.},
   title={The structure of groups with quasiconvex hierachy},
   date={2011},
   eprint={http://comet.lehman.cuny.edu/behrstock/cbms/program.html}}

\end{biblist}
\end{bibdiv}

\end{document}